\newcommand{\C}{\mathbb{C}}
\newcommand{\R}{\mathbb{R}}
\newcommand{\del}{\partial}
\newcommand{\delbar}{\bar{\partial}}
\newcommand{\ddc}{\text{dd}^c}
\newtheorem{theorem}{Theorem}[section]
\newtheorem{proposition}[theorem]{Proposition}
\newtheorem{corollary}[theorem]{Corollary}
\newtheorem{lemma}[theorem]{Lemma}
\theoremstyle{definition}
\newtheorem{definition}[theorem]{Definition}
\newtheorem{remark}[theorem]{Remark}
\numberwithin{equation}{section}
\title{Some properties of plurisubharmonic functions}
\author{Alano Ancona}
\address{Département de Mathématiques (UMR 8628), Bat. 425, Université Paris-Sud, Faculté des Sciences d'Orsay, F-91405 Orsay Cedex, France}
\email{alano.ancona@math.u-psud.fr}
\author{Lucas Kaufmann}
\address{Sorbonne Universités, UPMC Univ Paris 06, IMJ-PRG, UMR 7586 CNRS, Univ Paris Diderot, Sorbonne Paris Cité, F-75005, Paris, France}
\email{lucas.kaufmann@imj-prg.fr}
\date{}
\begin{document}
\maketitle

\begin{abstract}
Two properties of  plurisubharmonic functions are proven. The first result is a Skoda type integrability theorem with respect to a Monge-Ampère mass with Hölder continuous potential. The second one says that  locally, a  p.s.h.\ function is   $k$-Lipschitz outside a set of Lebesgue measure smaller that $c/k^2$.
\end{abstract}

\section{Introduction and main results}

Let $\Omega$ be an open subset of $\C^n$. Recall that a  function $\varphi: \Omega \to  [- \infty , \infty )$ is  plurisubharmonic\footnote{We adopt here the standard definition given in \cite{demailly:agbook} and \cite{hormander:convexity} and don't  exclude functions that are identically $-\infty$ in some connected component of $\Omega$. }  (p.s.h.\ for short) if $\varphi$ is upper semicontinuous and if for  every complex line $L \subset \C^n$ the function $\varphi|_{\Omega \cap L}$ is subharmonic in $\Omega \cap L$.

Basic examples  are given by $\varphi := \log |h|$ with $h:\Omega  \to  { \mathbb C} $  holomorphic in $\Omega$, in particular $ \varphi  (z)=\log \vert  z \vert  $ with $\Omega ={ \mathbb C} ^n$. Plurisubharmonicity is preserved on taking the maxima of a  finite number of p.s.h.\ functions and on  taking  the pointwise limit of a decreasing sequence of p.s.h.\ functions.

The \textit{Lelong number} of a p.s.h.\ function $\varphi:\Omega     \to  [- \infty , \infty )$, $\Omega  \subset   { \mathbb C} ^n$,  at  $a \in \Omega$ is defined as $$\nu(\varphi;a) := \liminf_{z \to a,\,z\ne a}\frac{\varphi(z)}{\log|z-a|}$$
and it somewhat measures the singularity of $\varphi$ at $a$. This number can be characterized as $\nu(\varphi;a) = \sup \{\gamma: \varphi(z) \leq \gamma \log |z-a| + O(1) $ as $z \to  a\,\}$ and  is one of the most basic quantities associated to the singularity of $\varphi$ at $a$. The function $z \mapsto \nu(\varphi;z)$ is upper semicontinuous with respect to the usual topology. It is a deep theorem of Y-T. Siu that this function is also upper semicontinuous with respect to the Zariski topology, i.e. for evey $c > 0$ the set  $ \{ a \in \Omega \,;\, \nu ( \varphi  ,a) \geq c\, \}$ is a closed analytic subvariety of $\Omega $. For further properties and equivalent definitions of the Lelong number the reader may consult \cite{demailly:agbook} and \cite{hormander:convexity}.

A classical theorem of Skoda \cite{skoda:sous-ens-an} states that if $\nu(\varphi;a)<2$ then $e^{-\varphi}$ is integrable in a neighborhood of $a$ with respect to the Lebesgue measure. This result is basic, for instance, in the study of multiplier ideal sheaves associated to a p.s.h.\ function (see \cite{lazarsfeld-positivity2}).

Our first result is a generalization of Skoda's theorem where the Lebesgue measure is replaced by a general Monge-Ampère mass with a Hölder continuous local potential (see section \ref{sec:positive-currents} for the definitions). This class of measures appears naturally in the study of holomorphic dynamical systems (see for instance \cite{sibony:dynamique-Pk}).

\begin{theorem} \label{thm:integrability}
Let $0<\alpha \leq 1$, let $u:\Omega  \to  (- \infty , \infty )$ be an $\alpha$-Hölder continuous p.s.h.\ function in the domain $\Omega  \subset    \C^n$ and let $z \in   \Omega $. If $\varphi$ is a p.s.h. function in $\Omega $ and if  $\nu(\varphi;z) < \frac{2 \alpha}{\alpha + n(2-\alpha)}$, then there is a neighborhood $K \subset   \Omega $ of $z$ such that the integral
\begin{equation*}
\int_K e^{-\varphi} (\ddc u)^n
\end{equation*}
is finite. In other words, $e^{- \varphi  }$ is locally integrable in $U:= \{  \xi   \in   \Omega \, ;\, \nu(\varphi; \xi  ) < \frac{2 \alpha}{\alpha + n(2-\alpha)} \,\} $ with respect to the positive measure $(\ddc u)^n$.
\end{theorem}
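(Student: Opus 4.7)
The plan is to combine a Hölder-induced mass decay for $(\ddc u)^n$ on small balls with a dyadic regularization of $u$ at adapted scales. Without loss of generality I translate so that $z=0$ and restrict to a ball $B \Subset \Omega$ on which $u$ is $\alpha$-Hölder with constant $K$; by subtracting constants I further arrange $\varphi \leq 0$ and $u \leq 0$ on $B$. Fix $\gamma$ with $\nu(\varphi;0) < \gamma < \tfrac{2\alpha}{\alpha+n(2-\alpha)}$. The aim is to show $\int_{B(0,\rho)} e^{-\varphi}(\ddc u)^n < \infty$ for $\rho$ small.

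The first ingredient is the Hölder mass estimate
\[
(\ddc u)^n(B(0,r)) \leq C r^{n\alpha},
\]
obtained by rescaling $v(w):=u(rw)$ to the unit ball (where $\operatorname{osc} v \lesssim r^{\alpha}$) and applying the Chern--Levine--Nirenberg inequality. The second ingredient is the smooth family $u_\delta := u \ast \rho_\delta$, which satisfies $\|u - u_\delta\|_\infty \leq K \delta^\alpha$ together with the pointwise density bound $(\ddc u_\delta)^n \leq C \delta^{n(\alpha-2)} d\lambda$, obtained from the standard estimate $\|D^2 u_\delta\|_\infty \leq C \delta^{\alpha-2}$.

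I would then decompose $B(0,\rho) = \bigcup_k A_k$ into dyadic annuli $A_k = \{r_{k+1} \leq |z| < r_k\}$, $r_k := 2^{-k}\rho$. On each $A_k$, choose a regularization scale $\delta_k = r_k^{\theta}$ for an exponent $\theta$ to be optimized, and split
\[
\int_{A_k} e^{-\varphi}(\ddc u)^n = \int_{A_k} e^{-\varphi}(\ddc u_{\delta_k})^n + \int_{A_k} e^{-\varphi}\bigl[(\ddc u)^n - (\ddc u_{\delta_k})^n\bigr].
\]
The first term is a weighted Lebesgue integral, bounded via the pointwise density control and the \emph{classical} Skoda theorem (applicable since $\gamma < \tfrac{2}{n+1} \leq 1 < 2$), combined with the Lelong-number control of $\int_{B(0,r_k)} e^{-\varphi} d\lambda$ at scale $r_k$. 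The second term is handled by a Bedford--Taylor-type stability inequality that propagates the $L^\infty$-error $\|u - u_{\delta_k}\|_\infty \leq K \delta_k^\alpha$ through an iterated integration by parts against $e^{-\varphi}$. Choosing $\theta$ to balance the competing powers $\delta_k^{n(\alpha-2)}$ and $\delta_k^\alpha$ yields an annular bound $\int_{A_k} e^{-\varphi}(\ddc u)^n \leq C r_k^\beta$ whose exponent $\beta = \beta(\gamma,\alpha,n)$ is strictly positive precisely when $\gamma < \tfrac{2\alpha}{\alpha + n(2-\alpha)}$; summation of the resulting geometric series gives the claim.

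The main obstacle is making the stability comparison between $(\ddc u)^n$ and $(\ddc u_{\delta})^n$ sharp against the unbounded weight $e^{-\varphi}$: standard Bedford--Taylor stability is formulated for bounded test functions, so one must combine Skoda's classical Lebesgue theorem with a Chern--Levine--Nirenberg-type chain of integrations by parts, tracking scale-by-scale the Hessian masses of $u$ and $u_\delta$ and the Lelong-number hypothesis on $\varphi$. The specific form of the threshold $\tfrac{2\alpha}{\alpha + n(2-\alpha)}$ emerges from the sharp balancing of the Hölder error $\delta^\alpha$ against the density blow-up $\delta^{n(\alpha-2)}$ of the regularization of $u$.
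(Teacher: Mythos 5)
The core of your plan --- comparing $(\ddc u)^n$ with $(\ddc u_\delta)^n$ at a scale $\delta$ chosen to balance the H\"older error $\delta^\alpha$ against the Hessian blow-up $\delta^{\alpha-2}$ --- is the right heuristic, and your preliminary estimates (the mass bound $(\ddc u)^n(B(0,r))\lesssim r^{n\alpha}$, the bounds on $u_\delta$) are correct. But the step you yourself flag as ``the main obstacle'' is a genuine gap, and it is exactly where all the work lies. An ``iterated integration by parts against $e^{-\varphi}$'' is not available: $e^{-\varphi}$ is unbounded, is not a difference of p.s.h.\ functions, and $\ddc(e^{-\varphi})$ has no controlled mass, so no Bedford--Taylor-type stability inequality applies to it. Moreover your accounting suggests the error term on $A_k$ would be $\lesssim \delta_k^{\alpha}$ times a fixed mass, which would be summable for any $\theta>0$ and would then yield integrability for every Lelong number below $2$ --- a sign that the coupling between the regularization error and the depth of the singularity of $\varphi$ has been lost somewhere in the sketch.

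The paper's resolution is to decompose by sublevel sets of $\varphi$ rather than by dyadic annuli in space: writing $\varphi_N=\max(\varphi,-N)$ and $\psi_N=\varphi_{N-1}-\varphi_N$, one bounds $\int e^{-\varphi}(\ddc u)^n$ by $\sum_N e^N\int\psi_{N-1}(\ddc u)^n$, so that the test function $\psi_{N-1}$ is bounded between $0$ and $1$ and is a difference of p.s.h.\ functions; the regularization scale is then tied to $N$ (the depth of $\varphi$), namely $\varepsilon(N)=e^{-(1/\alpha+c)N}$, not to the distance to $z$. Two inputs absent from your outline are then essential: first, Kiselman's estimate on the Lebesgue volume of $\{\varphi\le -N+1\}$, which controls all the boundary terms produced by the cut-off function in the integration by parts; second, the Dinh--Nguyen--Sibony theorem that Monge--Amp\`ere currents of H\"older continuous potentials are locally moderate, which is what bounds the masses of $\ddc\psi_{N-1}\wedge(\ddc u_\varepsilon)^{j}\wedge(\ddc u)^{n-1-j}$ uniformly in $N$ and $\varepsilon$ and makes the $\|u-u_\varepsilon\|_\infty$ error term usable. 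The threshold $\frac{2\alpha}{\alpha+n(2-\alpha)}$ then emerges from a three-way balance --- $e^N\varepsilon(N)^\alpha$ must be summable while $\varepsilon(N)^{n(\alpha-2)}$ must be beaten by the sublevel-set volume decay --- which your two-term spatial balancing does not reproduce. As written, the proposal does not constitute a proof.
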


Our second result is independent of the first and it applies to a wider class of functions  than the class of p.s.h.\ functions. In what follows, a function $F:\Omega \to  [- \infty , \infty )$ defined on an open subset $\Omega $ of $ \C^n$ is said to be  \emph{separately subharmonic} if  for every $\xi=( \xi  _1,\dots,  \xi _n) \in { \mathbb C} ^n$ and every $j=1,\ldots,N$ the partial function $z  \to  F(\xi_1,\dots,\xi_{j-1},z,$ $\xi_{j+1}\dots,\xi_n)$ is subharmonic in its domain of definition (i.e.\, $U_ \xi := \{ z \in   { \mathbb C} \,;(\xi_1,\dots,\xi_{j-1},$ $z,\xi_{j+1}\dots,\xi_n) \in   \Omega  \, \}$).

Roughly speaking, the result says  that given ${ \mathbb K}  \subset    \subset   \Omega $ and $ \varepsilon >0$, there is  a set $L \subset  { \mathbb K} $ whose Lebesgue measure is $ \leq  \varepsilon $ such that  $F$ is $k$-Lipschitz in $\mathbb K \setminus L$ with $k\sim  \varepsilon ^{-2}$. A precise statement is as follows.

\begin{theorem} \label{thm:lipschitz-dimN} Let $F$ be a negative separately subharmonic function on a connected open subset $\Omega $ of $ \C^n$. Let $\omega $ and $\omega '$ be two non-empty open and relatively compact subsets of $\Omega $.
Then for every real number $k >0$,  there is a compact set $L \subset   \omega  $ such that  

(i) $F_{ | L}$ is finite and $k$-Lipschitz

(ii) $|\omega  \setminus L | \leq { \frac {  C} {k^2 }} \,  | F( \xi  ) | ^2$ for every $ \xi   \in   \omega ' $,

where  $C$ is a positive constant depending only on $\Omega $, $\omega $ et $\omega '$.
\end{theorem}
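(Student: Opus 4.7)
The plan is to reduce to the one-dimensional case slicewise and then combine the slicewise estimates using a Haj\l{}asz-type characterization of Lipschitz functions. The key auxiliary ingredient, proved by induction on the complex dimension, is the $L^2$-mean inequality
\[
\int_\omega |F|^2 \, dz \leq C |F(\xi)|^2 \qquad (\star)
\]
for every $\xi \in \omega'$, valid whenever $F$ is negative and separately subharmonic, with $C$ depending only on the geometry.

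By Avanissian's theorem, a negative separately subharmonic function $F$ is jointly subharmonic on $\Omega$, so $F \in L^1_{\text{loc}}$ and each $\mu_j := \Delta_{z_j} F$ is a positive Borel measure. Iterated sub-mean-value on a polydisc centered at any $\xi \in \omega'$ and containing an intermediate set $\omega''$ gives $\|F\|_{L^1(\omega'')} \leq C |F(\xi)|$, and integration by parts against a cut-off then yields $\|\mu_j\|_\omega \leq C |F(\xi)|$. For the base case $n=1$ of $(\star)$, perform the Riesz decomposition $F = H + U$ on $\omega''$ with $U(z) = (2\pi)^{-1} \int \log|z-w| \, d\mu(w)$: harmonic mean value gives $\|H\|_{L^\infty(\omega)} \leq C|F(\xi)|$, while Cauchy--Schwarz combined with the local integrability of $(\log|\cdot|)^2$ in $\C$ gives $\int_\omega U^2 \leq C\|\mu\|^2 \leq C|F(\xi)|^2$. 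For $n \geq 2$, applying the $n=1$ case slicewise in the first variable and Fubini reduces the estimate to $\int |F(\xi_1, z_{\hat 1})|^2 \, dz_{\hat 1}$, which is controlled by the inductive hypothesis applied to $F(\xi_1, \cdot)$ in $\C^{n-1}$.

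With $(\star)$ available I would control the gradient of $F$ direction by direction. A $1$D Riesz decomposition on each slice in the $z_j$ variable gives the pointwise bound $|\partial_{z_j} F(z)| \leq |\partial_{z_j} H_j(z)| + c \, I_1 \mu_{j, z_{\hat j}}(z_j)$, where $I_1$ is the one-dimensional Riesz potential of order one. The standard weak-type bound $I_1 : \mathcal{M}(\C) \to L^{2, \infty}(\C)$ and the harmonic mean value for $H_j$ give the slicewise estimate $\|\partial_{z_j} F(\cdot, z_{\hat j})\|_{L^{2, \infty}(\omega_j)} \leq C |F(\xi_j, z_{\hat j})|$. A direct Fubini computation shows $\|\partial_{z_j} F\|_{L^{2, \infty}(\omega)}^2 \leq \int \|\partial_{z_j} F(\cdot, z_{\hat j})\|_{L^{2, \infty}(\omega_j)}^2 \, dz_{\hat j}$, and combining this with $(\star)$ applied to $F(\xi_j, \cdot)$ yields $\|\partial_{z_j} F\|_{L^{2, \infty}(\omega)} \leq C|F(\xi)|$. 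Summing in $j$ gives $\||\nabla F|\|_{L^{2, \infty}(\omega)} \leq C |F(\xi)|$, and since the Hardy--Littlewood maximal operator $M$ is bounded on $L^{2, \infty}$, also $\|M|\nabla F|\|_{L^{2, \infty}(\omega)} \leq C|F(\xi)|$.

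For the conclusion, take $L$ to be a compact subset (of the same Lebesgue measure, by inner regularity) of $\{z \in \omega : M|\nabla F|(z) \leq k\} \setminus \{F = -\infty\}$. The Haj\l{}asz pointwise inequality $|F(x) - F(y)| \leq C_0 |x-y| (M|\nabla F|(x) + M|\nabla F|(y))$, valid a.e. for $F \in W^{1,1}_{\text{loc}}$, then makes $F|_L$ Lipschitz with constant $\leq C_0 k$, while the weak-$L^2$ bound gives $|\omega \setminus L| \leq C|F(\xi)|^2/k^2$. Replacing $k$ by $k/C_0$ absorbs $C_0$ into the constant. The main obstacle I expect is $(\star)$: a direct Riesz-decomposition argument in $\R^{2n}$ fails for $n \geq 2$ since the square of the Newton kernel is not locally integrable in dimension $\geq 4$; leveraging separate subharmonicity to reduce slicewise to the one-dimensional setting---where $(\log|\cdot|)^2$ is locally integrable---is precisely what makes the induction go through.
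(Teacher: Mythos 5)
Your proposal is sound and, in its decisive step, takes a genuinely different route from the paper. The shared core is the same: Avanissian's theorem, the comparison $\int_\omega|F|^2\,d\lambda\le C|F(\xi)|^2$ (this is exactly the right-hand inequality of Lemma \ref{lemma:integral-function-comparison}, proved there too by a Riesz/Green decomposition in each slice plus Fubini and a Harnack-type lemma), the slicewise Riesz decomposition with the weak-type $L^2$ bound for the order-one Riesz potential of the slice measures, and the Bojarski--Haj\l asz pointwise inequality. Where you diverge is in how the slicewise gradient information is assembled. The paper only controls the \emph{partial}, two-dimensional maximal functions $\mathcal M^2_{|\nabla_jF|}$; the resulting set is one on which $F$ is $k$-Lipschitz in each variable separately, and upgrading this to joint Lipschitzness costs the whole density-point and chaining machinery (Lemmas \ref{lemma:density-intersection}--\ref{lemma:density-measure-slice}, parts B--D of the proof, and the intersection over $n!$ permutations). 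You instead first transfer the slicewise weak-$L^2$ bounds to a bound $\||\nabla F|\|_{L^{2,\infty}(\omega)}\le C|F(\xi)|$ in the full $2n$-dimensional weak space --- your Fubini inequality for the weak quasinorm is correct, as one checks directly on the distribution function --- and then invoke the boundedness of the $2n$-dimensional Hardy--Littlewood maximal operator on $L^{2,\infty}$, so that Bojarski--Haj\l asz can be applied in $\R^{2n}$ at once. This is a real simplification: it replaces the combinatorial chaining by a standard Lorentz-space fact, which also subsumes the paper's Theorem \ref{thm:maximal-riesz-estimate} (the maximal-function upgrade of Zygmund's estimate) as a special case. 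Two points you should still write out: (a) the identification, for a.e.\ slice, of the distributional $\partial_{z_j}F$ in $\R^{2n}$ with the derivative of the slice function, which is what lets the slicewise Riesz bounds control the genuine gradient (standard for $W^{1,1}_{\rm loc}$, and $F\in W^{1,1}_{\rm loc}$ since it is subharmonic); and (b) the Bojarski--Haj\l asz inequality with the local maximal function $\mathcal M^\rho$ only covers pairs with $|x-y|\le\rho/3$, so pairs of points of $L$ that are far apart require a separate, easy chaining step or the gluing Lemma \ref{lemma:lipschitz-gluing}, exactly as in the paper's reduction to polydiscs.
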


{\bf Acknowledgements.} Both authors would like to warmly thank T-C. Dinh  whose questions are at the origin of the main results or this paper. The second author was supported by a grant from Région Île-de-France.

\section{Preliminary material} \label{sec:introduction}

\subsection{Closed positive currents and Monge-Ampère measures} \label{sec:positive-currents}

A $k$-current on a complex manifold $X$ of dimension $n$ is a continuous linear form on the space of compactly supported differential forms of degree $(2n-k)$. Such objects generalize $k$-forms with coefficients in $L^1_{loc}$ and submanifolds of real codimension $k$. For the basic theory of currents in complex manifolds see \cite{demailly:agbook}.

The existence of a complex structure implies, by duality, that every $k$-current decomposes as a sum of $(p,q)$-currents with $p+q = k$. Real currents of type $(p,p)$ carry a notion of positivity (see \cite{demailly:agbook}, \cite{lelong:positivity}). Examples of positive currents include Kähler forms and currents of integration along complex submanifolds of $X$.

The operators $\del$, $\delbar$, $\text d$ and $\text d ^c = \frac{i}{2\pi}(\delbar - \del)$ extend to currents by duality and the $\ddc$-Poincaré Lemma states that every positive closed $(1,1)$-current $T$ can be written locally as $T = \ddc u$ where $u$ is a p.s.h.\ function, called the local potential of $T$.

If $T$ is a closed positive current and $u$ is a locally bounded p.s.h.\ function the current $uT$ is well-defined and  the product (or intersection) current $\ddc u \wedge T$ can be defined  by the formula $$\ddc u \wedge T \stackrel{\text{def}}{=} \ddc(uT).$$

One may then define the product $S \wedge T$ when $S$ is a closed positive $(1,1)$-current with bounded local potential: just write  $S = \ddc u$ locally and use the above formula. However,  the wedge product of general currents may not be well defined.

By induction, the product $\ddc u_1 \wedge \ldots \wedge \ddc u_p$ is well-defined when  $u_1,\ldots,u_p$ are locally bounded p.s.h.\ functions. In particular, if $u$ is a locally bounded p.s.h.\ function then the current $(\ddc u)^n$ is well-defined. It is a positive measure called \emph{Monge-Ampère measure} associated to $u$. See \cite{klimek} or the original paper \cite{bedford-taylor} for some basic properties of the Monge-Amp\`ere operator.

\subsection*{Locally moderate currents}
Recall that the set of all p.s.h.\ functions in $X$ is closed in the space $L^1_{loc}(\Omega )$  and that every family of p.s.h.\ functions that is bounded in $L^1_{loc}(\Omega )$ is relatively compact in $L^1_{loc}(\Omega )$ (see Theorem 3.2.12 in \cite{hormander:convexity}). For the sake of simplicity, such a family is called a compact family.

The notion of locally moderate currents and measures was introduced by Dinh-Sibony (see \cite{dns:estimates} for more details). 

\begin{definition}
A measure $\mu$ on a complex manifold $X$ is called \textit{locally moderate} if for any open set $U \subset X$, any compact set $K \subset U$ and any compact family $\mathcal F$ of p.s.h. functions on $U$  there are constants $\beta> 0$ and $C> 0$ such that
\begin{equation*}
\int_K e^{-\beta \psi} d\mu \leq C,\;\; \text{for every }\; \psi \in \mathcal F.
\end{equation*}
\end{definition}

It  follows immediately from the definition that for any $\mathcal F$ and $\mu$ as above, $\mathcal F$ is bounded in $L^p_{loc}(\mu)$ for $1\leq p < \infty$ and that $\mu$ does not charge pluripolar sets.

A positive closed current $S$ of type $(p,p)$ on $X$ is said to be \textit{locally moderate} if the trace measure $\sigma_S = S \wedge \omega^{n-p}$ is locally moderate. Here $n = \dim X$ and $\omega$ is the fundamental form of a fixed Hermitian metric on $X$. 

\begin{theorem}(Dinh-Nguyen-Sibony \cite{dns:estimates}) \label{thm:holder-moderate}
If $1\leq p \leq n$ and $u_1,\ldots,u_p$ are Hölder continuous p.s.h. functions on $X$ then the Monge-Ampère current $\ddc u_1 \wedge \ldots \wedge \ddc u_p$ is locally moderate.
\end{theorem}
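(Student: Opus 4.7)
Write $\sigma_T := T \wedge \omega^{n-p}$ with $T = \ddc u_1 \wedge \cdots \wedge \ddc u_p$. By the layer-cake formula, the moderate property $\int_K e^{-\beta \psi}\, d\sigma_T \leq C$ is equivalent to an exponential decay of sublevel sets
\[
\sigma_T\bigl(\{z \in K : \psi(z) < -t\}\bigr) \leq C' e^{-\beta' t}, \qquad \psi \in \mathcal F,\; t > 0.
\]
My strategy is to compare $\sigma_T$ with a smoothed current whose density against Lebesgue measure is quantitatively controllable, and then feed this into Skoda's classical exponential integrability, which already handles any $L^1_{loc}$-bounded (hence compact) family $\mathcal F$ with respect to Lebesgue measure.

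\textbf{Smoothing step.} Let $\alpha \in (0,1]$ be the common H\"older exponent and let $u_{i,\epsilon} := u_i * \chi_\epsilon$ be the standard mollifications on a slightly shrunken open neighborhood of $K$. The H\"older hypothesis gives two quantitative estimates: $\|u_i - u_{i,\epsilon}\|_{L^\infty} \leq C \epsilon^\alpha$ and pointwise $0 \leq \ddc u_{i,\epsilon} \leq C \epsilon^{\alpha - 2}\, \omega$. Consequently the mollified current $T_\epsilon := \ddc u_{1,\epsilon} \wedge \cdots \wedge \ddc u_{p,\epsilon}$ satisfies $T_\epsilon \wedge \omega^{n-p} \leq C \epsilon^{p(\alpha-2)}\, \omega^n$ pointwise. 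Combining with Skoda's theorem applied uniformly on $\mathcal F$ yields a fixed $\beta_0 > 0$ and a constant $C_0$ with
\[
\int_K e^{-\beta_0 \psi}\, T_\epsilon \wedge \omega^{n-p} \leq C_0\, \epsilon^{p(\alpha-2)}, \qquad \forall\, \psi \in \mathcal F.
\]

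\textbf{Bridging smooth and singular via telescoping.} I would write
\[
T - T_\epsilon \;=\; \sum_{i=1}^{p} \ddc u_1 \wedge \cdots \wedge \ddc u_{i-1} \wedge \ddc(u_i - u_{i,\epsilon}) \wedge \ddc u_{i+1,\epsilon} \wedge \cdots \wedge \ddc u_{p,\epsilon}
\]
and integrate by parts twice (after replacing $\psi$ by its truncation $\psi^M := \max(\psi, -M)$ and later letting $M \to \infty$) so as to transfer the two derivatives from the bounded factor $u_i - u_{i,\epsilon}$ onto the weight $e^{-\beta \psi^M}$. Using $\|u_i - u_{i,\epsilon}\|_\infty \leq C \epsilon^\alpha$ and Chern--Levine--Nirenberg bounds for the remaining mixed wedge products, this should produce a bound of order $C \epsilon^\alpha$ on the error contribution to $\int e^{-\beta \psi}(T - T_\epsilon) \wedge \omega^{n-p}$. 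Combined with the smoothing step, I get a two-term inequality
\[
\sigma_T\bigl(\{\psi < -t\}\bigr) \;\leq\; C\bigl(\epsilon^{p(\alpha-2)} e^{-\beta_0 t} + \epsilon^\alpha\bigr),
\]
and optimizing $\epsilon = \epsilon(t)$ converts it into the desired exponential decay with some $\beta < \beta_0$.

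\textbf{Main obstacle.} The delicate step is the integration by parts against the unbounded, non-smooth weight $e^{-\beta\psi}$, whose derivatives involve the singular current $\ddc \psi$. One legalizes the move by the $\psi^M$ truncation, but must then carefully control intermediate mixed currents of the form $\ddc u_{i_1} \wedge \cdots \wedge \ddc u_{i_k} \wedge \ddc u_{j_{k+1},\epsilon} \wedge \cdots$ uniformly in $\epsilon$, $M$, and $\psi \in \mathcal F$. This likely forces an induction on $p$, and the final exponent $\beta$ comes out depending explicitly on $\alpha$, $n$, $p$ and on bounds intrinsic to the compact family $\mathcal F$.
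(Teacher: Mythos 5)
Your overall architecture --- regularize the potentials at scale $\epsilon$, compare $T$ with $T_\epsilon$ whose trace has density $O(\epsilon^{p(\alpha-2)})$ against $\omega^n$, and optimize $\epsilon$ against the level $t$ --- is indeed the Dinh--Nguyen--Sibony scheme that the paper quotes for this theorem and re-uses to prove Theorem \ref{thm:integrability}. But the bridging step, which you yourself flag as the main obstacle, contains a genuine gap rather than a routine verification. When you integrate by parts to move the two derivatives off $u_i-u_{i,\epsilon}$, they land on the weight, and $\ddc\bigl(e^{-\beta\psi^M}\bigr)=e^{-\beta\psi^M}\bigl(\beta^2\, \text{d}\psi^M\wedge \text{d}^c\psi^M-\beta\,\ddc\psi^M\bigr)$. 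The resulting error term is $\|u_i-u_{i,\epsilon}\|_\infty$ times the mass over $K$ of $e^{-\beta\psi^M}\bigl(\text{d}\psi^M\wedge \text{d}^c\psi^M+\ddc\psi^M\bigr)$ wedged with the mixed currents, and this mass is \emph{not} controlled by Chern--Levine--Nirenberg: it is unbounded as $M\to\infty$ unless one already has an exponential integrability statement of exactly the kind being proved. So the asserted $C\epsilon^\alpha$ bound on the error is circular as stated.

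The way out (in \cite{dns:estimates}, and in the paper's proof of Theorem \ref{thm:integrability}) is never to differentiate the exponential weight. One writes $\int_K e^{-\psi}\,d\sigma_T\leq\sum_N e^N\int\psi_{N-1}\,d\sigma_T$ with $\psi_N=\max(\psi,-N+1)-\max(\psi,-N)$, a bounded difference of truncations supported in $\{\psi<-N+1\}$; the integration by parts (Lemma \ref{lemma:int-parts}) is then performed against $\ddc\psi_{N-1}$ wedged with the mixed currents, whose mass is bounded uniformly in $N$ and $\epsilon$ by Lemmas \ref{lemma:loc-bdd} and \ref{lemma:loc-bdd2} --- and it is precisely here that the induction on $p$ (local moderateness of the lower-order Monge--Amp\`ere currents) enters, not merely ``likely''. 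Two further devices you omit are essential: the regularization scale must be tied to the level, $\epsilon=\epsilon(N)$ exponentially small in $N$, rather than optimized once at the end; and the boundary terms produced by the cutoff require modifying each $u_i$ near the boundary of the ball (e.g.\ to $A\log\|z\|$) so that it is smooth there, the shell contributions then being controlled by Kiselman's volume estimate (Lemma \ref{lemma:sub-level-volume}). Without these the telescoping comparison does not close.
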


The proof in  \cite{dns:estimates} uses the following two lemmas. They will also be used here to prove Theorem \ref{thm:integrability}. We denote $B_r$ the ball of radius $r$ centered at the origin of $\C^n$ and fix a fundamental form $\omega $ as above. 

\begin{lemma} (\cite{dns:estimates}) \label{lemma:loc-bdd}
Let $S$ be a locally moderate closed positive current of type $(n-1,n-1)$ on $B_r$. If $\mathcal G$ is a compact family of p.s.h. functions on $B_r$ then $\mathcal G$ is bounded in $L^1_{loc}(\sigma_S)$. Moreover, the mass of the measures $\ddc \varphi \wedge S$, $\varphi \in \mathcal G$ are locally bounded in $B_r$ uniformly on $\varphi$.
\end{lemma}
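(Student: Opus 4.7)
The plan is to use local moderation to upgrade the exponential bound $\int e^{-\beta\varphi}\, d\sigma_S \leq C$ into an $L^1(\sigma_S)$-bound on $\varphi$, and then to deduce the mass bound for $\ddc\varphi \wedge S$ by a Stokes-type integration-by-parts argument exploiting $dS = 0$.

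\textbf{Step 1 ($L^1_{loc}(\sigma_S)$-bound).} A compact family of p.s.h.\ functions is locally uniformly bounded above, so, given a compact $K \subset B_r$, I may choose $K'$ with $K \Subset K' \Subset B_r$ and $M \in \R$ with $\sup_{K'} \varphi \leq M$ for every $\varphi \in \mathcal G$. Local moderation of $\sigma_S$ applied to (a neighborhood of) $K'$ and $\mathcal G$ yields constants $\beta, C > 0$ with $\int_{K'} e^{-\beta \varphi}\, d\sigma_S \leq C$. Applying the elementary inequality $t \leq e^t$ to $t = \beta(M - \varphi) \geq 0$ gives
$$
\beta \int_{K'} (M - \varphi)\, d\sigma_S \leq e^{\beta M}\, \int_{K'} e^{-\beta \varphi}\, d\sigma_S \leq C\, e^{\beta M}.
$$
Since $\sigma_S(K')$ is finite and $\varphi \leq M$ on $K'$, this produces a uniform bound on $\int_{K'} |\varphi|\, d\sigma_S$, proving the first assertion.

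\textbf{Step 2 (mass bound).} Pick a cutoff $\chi \in C_c^\infty(B_r)$ with $0 \leq \chi \leq 1$, $\chi \equiv 1$ on $K$ and $\operatorname{supp}\chi \subset K'$. Since $\ddc\varphi \wedge S$ is a positive measure and $S$ is closed, two applications of Stokes' formula give
$$
\int_K \ddc\varphi \wedge S \leq \int \chi\, \ddc\varphi \wedge S = \int \varphi\, \ddc\chi \wedge S.
$$
Because $\ddc\chi$ is a smooth $(1,1)$-form of compact support, there is $A > 0$ (depending only on $\chi$) with $-A\omega \leq \ddc\chi \leq A\omega$ on $B_r$, hence
$$
\Bigl|\int \varphi\, \ddc\chi \wedge S\Bigr| \leq A \int_{K'} |\varphi|\, d\sigma_S,
$$
which is uniformly bounded by Step 1.

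\textbf{Main obstacle.} The technical subtlety is to justify the integration by parts when $\varphi$ is merely p.s.h.\ and not locally bounded, since $\ddc\varphi \wedge S$ is not a priori defined in the Bedford-Taylor sense. The standard fix is to apply the above estimate to smooth p.s.h.\ regularizations $\varphi_j \downarrow \varphi$, for which the formula is elementary, and then to pass to the limit using the $L^1(\sigma_S)$-bound of Step 1 together with monotone convergence on the right-hand side.
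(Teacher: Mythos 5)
Your proof is correct and follows essentially the same route as the paper's: the elementary inequality $t\le e^t$ converts the moderate-measure bound into the $L^1(\sigma_S)$ estimate (the paper simply normalizes $\mathcal G$ to be negative on $K$ rather than carrying the constant $M$), and the mass bound comes from the same cutoff and integration by parts $\int\chi\,\ddc\varphi\wedge S=\int\varphi\,\ddc\chi\wedge S$ controlled by $\|\chi\|_{\mathcal C^2}\int|\varphi|\,d\sigma_S$. Your closing remark on justifying the integration by parts is a sensible point the paper leaves implicit.
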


\begin{proof}(as in \cite{dns:estimates}) Let $K$ be a compact subset of $B_r$. After subtracting a fixed constant we may assume that every element of $\mathcal G$ is negative on $K$. Since $\sigma_S$ is locally moderate we can choose $\beta,C >0 $ such that  $\int_K e^{-\beta \varphi} d\sigma_S \leq C$ for every $\varphi \in \mathcal G$. We thus have $\int_K \beta |\varphi| \; d\sigma_S \leq  \int_K e^{-\beta \varphi} d\sigma_S \leq C$ for every $\varphi \in \mathcal G$ which proves the first assertion.

For the second assertion let $K$ be a compact subset of $B_r$ and consider a cut-off function $\chi$ which is equal to  $1$ in a neighborhood of $K$ and which is supported on a larger compact $L \subset B_r$. We have, for $\varphi \in \mathcal G$  $$ \int_K \ddc \varphi \wedge S \leq \int_L \chi  \ddc \varphi \wedge S = \int_L \ddc \chi \wedge \varphi  S \leq \|\chi\|_{\mathcal C ^2} \int_L |\varphi| \; d\sigma_S,$$
which is uniformly bounded by the first part of the lemma.
\end{proof}

\begin{lemma} (\cite{dns:estimates}) \label{lemma:int-parts}
Let $r > 0$, $S$ be a locally moderate closed positive current of type $(n-1,n-1)$ on $B_{2r}$ and $u$ be an $\alpha$-Hölder continuous p.s.h. function on $B_r$ which is smooth on $B_r \setminus B_{r-4\rho}$ for some $0<\rho<r / 4$. Fix a smooth cut-off function $\chi$ with compact support in $B_{r-\rho}$, $0 \leq \chi \leq 1$ and $\chi \equiv 1$ on $B_{r-2\rho}$.

If $\varphi$ is a p.s.h. function on $B_{2r}$, then
\begin{equation*}
\begin{split}
\int_{B_r} \chi \varphi \ddc(uS) = -\int_{B_r \setminus B_{r-3\rho}} \ddc \chi \wedge \varphi u S - \int_{B_r \setminus B_{r-3\rho}} \text{d} \chi \wedge \varphi \text{d}^c u \wedge S \\
+ \int_{B_r \setminus B_{r-3\rho}} \text{d}^c \chi \wedge \varphi \text{d}u \wedge S + \int_{B_{r-\rho}} \chi u \; \ddc \varphi \wedge S.
\end{split}
\end{equation*}
\end{lemma}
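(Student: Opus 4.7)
My plan is to first verify the identity when $\varphi$ is smooth by a direct Leibniz-plus-Stokes computation, and then recover the general case by regularizing $\varphi$. For the smooth case, the starting point is the distributional Leibniz expansion
\[
\ddc(\chi u) \;=\; \chi\,\ddc u + u\,\ddc\chi + d\chi \wedge d^c u + du \wedge d^c\chi,
\]
whose two cross terms are genuine smooth forms because $d\chi$ and $d^c\chi$ are supported in $B_{r-\rho}\setminus B_{r-2\rho}\subset B_r\setminus B_{r-4\rho}$, where $u$ is smooth by hypothesis. Wedging with $\varphi S$, integrating over $B_r$, and solving for $\int_{B_r} \chi\varphi\,\ddc u\wedge S$ gives a four-term right-hand side. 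For the leading term $\int_{B_r}\varphi\,\ddc(\chi u)\wedge S$, two applications of Stokes' theorem---legitimate because $\chi u$ is continuous with compact support in $B_r$, $\varphi$ is smooth, and $S$ is closed (so that $\ddc(\varphi S)=\ddc\varphi\wedge S$)---produce $\int \chi u\,\ddc\varphi\wedge S$. Using the anti-commutation $du\wedge d^c\chi = -d^c\chi\wedge du$ to flip the sign of one cross term, and restricting the three boundary integrals to $B_r\setminus B_{r-3\rho}$ (legal since $d\chi,d^c\chi,\ddc\chi$ vanish on $B_{r-2\rho}$), yields exactly the claimed identity when $\varphi$ is smooth.

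For general p.s.h.\ $\varphi$, set $\varphi_\varepsilon:=\varphi*\rho_\varepsilon$, smooth and p.s.h.\ on $B_{2r-\varepsilon}$ and decreasing to $\varphi$ as $\varepsilon\downarrow 0$. I apply the smooth-case identity to each $\varphi_\varepsilon$ and pass to the limit. The three boundary integrals take the form
\[
\int_{B_r\setminus B_{r-3\rho}}\varphi_\varepsilon\,\eta\,d\sigma_S,
\]
where $\eta$ is a continuous function built from $\chi$, its derivatives, $u$, and $du,d^c u$ (all smooth on this annulus). Local moderateness of $S$ together with Lemma \ref{lemma:loc-bdd} yields a uniform $L^1(\sigma_S)$ majorant for $\{\varphi_\varepsilon\}$, so dominated convergence passes the limit inside.

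The delicate term is $\int_{B_{r-\rho}}\chi u\,\ddc\varphi_\varepsilon\wedge S$, which must converge to $\int_{B_{r-\rho}}\chi u\,\ddc\varphi\wedge S$. By definition $\ddc\varphi\wedge S=\ddc(\varphi S)$, which is well defined precisely because moderateness of $S$ forces $\varphi\in L^1_{loc}(\sigma_S)$. As $\varphi_\varepsilon\to\varphi$ in $L^1_{loc}(\sigma_S)$, the currents $\varphi_\varepsilon S$ converge weakly to $\varphi S$, whence $\ddc\varphi_\varepsilon\wedge S\to\ddc\varphi\wedge S$ as $(n,n)$-currents. The family $\{\varphi_\varepsilon\}$ is compact in $L^1_{loc}$, so Lemma \ref{lemma:loc-bdd} provides a uniform mass bound for $\{\ddc\varphi_\varepsilon\wedge S\}$ on compact subsets of $B_{r-\rho}$; by Banach--Alaoglu this upgrades weak convergence of currents to weak convergence as positive Radon measures, permitting us to test against the merely continuous compactly supported function $\chi u$. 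The main obstacle is exactly this upgrade from smooth test forms to continuous test functions, and it is where the moderateness hypothesis on $S$ is indispensable---without it neither the product $\varphi S$ nor the mass bound needed for the limiting procedure would be available.
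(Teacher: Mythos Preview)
Your proof is correct and follows essentially the same strategy as the paper's (which in turn refers to \cite{dns:estimates}): establish the identity for smooth $\varphi$ by a direct Leibniz/Stokes computation, then pass to general p.s.h.\ $\varphi$ by approximating with a decreasing sequence of smooth p.s.h.\ functions. Your treatment is in fact more detailed than the paper's sketch, and your handling of the limit in the term $\int \chi u\,\ddc\varphi_\varepsilon\wedge S$---using the uniform mass bound from Lemma~\ref{lemma:loc-bdd} to upgrade weak convergence of currents to weak convergence of measures so as to test against the merely continuous $\chi u$---is exactly the point that needs care.
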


Notice that  the smoothness of $u$ in $B_r \setminus B_{r-4\rho}$ makes the second and third integrals meaningful.
\begin{proof} (ref \cite{dns:estimates})
The case when $\varphi$ is smooth follows from a direct computation using integration by parts. The general case follows by approximating $\varphi$ by a decreasing sequence of smooth p.s.h.\ functions. See \cite{dns:estimates} for the complete proof.
\end{proof}

 We will also need a volume estimate of the sublevel sets of p.s.h.\ functions due to M. Kiselman.  We include Kiselman's argument here for the the reader's convenience.

\begin{lemma} (\cite{kiselman:sous-niveau}) \label{lemma:sub-level-volume}
Let $\varphi$ be a p.s.h.\ function on an open set $\Omega \subset \C^n$ and $K \subset \Omega$ be a compact subset. Then, for every $\gamma < 2 / \sup_{z \in K} \nu(\varphi;z)$ there is a constant $C_\gamma = C_\gamma(\varphi,\Omega,K)$ such that
\begin{equation*}
\lambda(K \cap \{\varphi \leq -M \}) \leq C_\gamma \,e^{-\gamma M}, \;\; M \in \R,
\end{equation*}
where $\lambda$ denotes the Lebesgue measure in $\C^n$.
\end{lemma}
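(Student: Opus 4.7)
The plan is to reduce the lemma to Skoda's classical integrability theorem (recalled in the introduction) via a Markov/Chebyshev-type inequality. The hypothesis $\gamma < 2/\sup_{z\in K}\nu(\varphi;z)$ is equivalent to saying that at every point $z\in K$ the Lelong number of $\gamma\varphi$ satisfies $\nu(\gamma\varphi;z)=\gamma\nu(\varphi;z)<2$, which is exactly the hypothesis of Skoda's theorem.

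First, for each $z\in K$, Skoda's theorem produces an open neighborhood $U_z\subset\Omega$ on which $e^{-\gamma\varphi}$ is integrable with respect to the Lebesgue measure. By compactness of $K$ I would extract a finite subcover $U_{z_1},\ldots,U_{z_N}$ and set $K'=\bigcup_j U_{z_j}$, which is an open neighborhood of $K$ on which
\begin{equation*}
C_\gamma':=\int_{K'}e^{-\gamma\varphi}\,d\lambda\;\leq\;\sum_{j=1}^{N}\int_{U_{z_j}}e^{-\gamma\varphi}\,d\lambda\;<\;\infty.
\end{equation*}
Shrinking $K'$ if necessary, I may assume $K\subset K'\subset\subset\Omega$, so that $C_\gamma'$ depends only on $\varphi$, $\Omega$ and $K$ (through the choice of cover).

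Second, I apply the Markov inequality to the nonnegative function $e^{-\gamma\varphi}$: on the sublevel set $\{\varphi\leq -M\}$ one has $e^{-\gamma\varphi}\geq e^{\gamma M}$, so
\begin{equation*}
\lambda\bigl(K\cap\{\varphi\leq -M\}\bigr)\;\leq\;e^{-\gamma M}\int_{K}e^{-\gamma\varphi}\,d\lambda\;\leq\;C_\gamma\,e^{-\gamma M},
\end{equation*}
with $C_\gamma:=C_\gamma'$. This is the desired estimate.

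The only subtle point is the passage from pointwise integrability (coming from Skoda) to uniform integrability on a neighborhood of $K$; the strict inequality $\gamma<2/\sup_K\nu(\varphi;\cdot)$ and the upper semicontinuity of the Lelong number guarantee that $\gamma\nu(\varphi;z)<2$ uniformly on $K$, so the finite covering argument goes through with no loss. Aside from this, the proof is essentially a one-line Markov argument, and no further potential-theoretic input beyond Skoda's theorem is required.
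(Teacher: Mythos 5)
Your proof is correct and follows essentially the same route as the paper: a Markov--Chebyshev inequality applied to $e^{-\gamma\varphi}$, with the finiteness of $\int_K e^{-\gamma\varphi}\,d\lambda$ supplied by Skoda's theorem since $\nu(\gamma\varphi;z)<2$ on $K$. The only difference is that you spell out the compactness/covering step that the paper leaves implicit.
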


\begin{proof}
Since $e^{\gamma (-M - \varphi)} \geq 1$ on $K \cap \{\varphi \leq -M \}$ we have
\begin{equation*}
\mu(K \cap \{\varphi \leq -M \}) \leq \int_K e^{\gamma (-M - \varphi(z))} d \lambda (z) = e^{-\gamma M} \int_K e^{-\gamma \varphi} d \lambda.
\end{equation*}

It suffices then to take $C_\gamma = \int_K e^{-\gamma \varphi} d \lambda$, which is finite by Skoda's Theorem since $\nu(\gamma \varphi; z) < 2$ for every $z \in K$.
\end{proof}

\subsection{Maximal functions and regularity in $W^{1,1}$}

If $U$ is open in $\R^N$ and if $f \in  L^1_{loc} (U)$, the Lebesgue set of $f$ is $${ \mathcal L}_f= \left \{ x \in   U\,:\,  \exists  a _0 \in   \R \text{ such that }\; \lim_{r \to 0} \oint _{B(x,r)} | f(t)-a_0 | \; dt=0\; \right\},$$ where the sign $\oint _A$ denotes the average over the set $A$. When  $a_0=a_0(x)$ exists it is equal to  $ \tilde f(x):=\lim_{ r \to 0} \oint_{ B(x,r) } f(x)\, dx $ and it is well know that (i) ${ \mathcal L}_f$ is a Borel set, (ii)\, $ \lambda _N(U\setminus { \mathcal L}_f)=0$ and (iii) $ \tilde f(x)=f(x)$ a.e.\ in $U$.  

For  a function $f \in L^1_{loc}(\R^N)$,  the Hardy-Littlewood  maximal function of $f$  is denoted
$${ \mathcal M}_f(a):=\sup_{r>0} \oint_{B(a,r)}\,  | f (x) | \, dx,\ \ \ a \in   \R^N,$$

More generally, for an open set $U \subset \R^N$ and $f \in L^1_{loc}(\R^N)$ we set $${ \mathcal M}^{\, \rho} _ f(a):=\sup_{0<r< \rho } \oint_{B(a,r)}\,  |f (x) | \, dx,\ \ \  \rho >0,\; a \in   U_ \rho, $$ 
where $U_ \rho = \{ z \in   U\,;\, d(z, U^c) > \rho \; \})$. It is easily checked that ${ \mathcal M}^{\, \rho} _ f$ is Borel measurable in $U_ \rho $.

We may now recall three classical results that will be basic for us in the next section. 

\begin{theorem} \label{thm:bojarski} (Bojarski \cite{bojarski}, Bojarski-Haj\l asz \cite{bojarski-hajlasz:sobolev})
Let $f \in   W^{1,1}_{\rm loc} (U)$. If $x \in   U_ \rho $  is such that ${ \mathcal M}^{ \rho }_{ | \nabla f | } (x)<+ \infty $ then $x$ is a Lebesgue point for $f$. Furthermore, for every $x$, $y \in   U_ \rho $ such that $ | x-y |  \leq {\frac  {  \rho } {3}}  $ and ${ \mathcal M}^{ \rho }_{ | \nabla f | }(x)< \infty $, ${ \mathcal M}^{ \rho }_{ | \nabla f | }(y)< \infty $ we have   $$|  \tilde f(x)- \tilde f(y) |  \leq C_N\;  | x-y | \; ({ \mathcal M}^{ \rho }_{ | \nabla f | }(x)+{ \mathcal M}^{ \rho }_{ | \nabla f | }(y)),$$
where $C_N$ is a constant depending only on the dimension $N$.
\end{theorem}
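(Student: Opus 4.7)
The plan is to deduce both parts of the theorem from the $L^1$-Poincar\'e inequality on balls combined with a telescoping argument over dyadic radii. I would first establish the inequality: for $f\in W^{1,1}(U)$ and every ball $B(x,r)\subset U$,
$$\oint_{B(x,r)} |f(y) - f_{B(x,r)}|\, dy \leq C_N\, r \oint_{B(x,r)} |\nabla f(y)|\, dy,$$
with $f_{B(x,r)} := \oint_{B(x,r)} f$. This follows from the pointwise identity $f(y)-f_{B(x,r)}=\frac{1}{|B(x,r)|}\int_{B(x,r)}\int_0^1 (y-z)\cdot\nabla f(z+t(y-z))\,dt\,dz$ together with Fubini, which reduces the estimate to a standard Riesz-potential bound inside the ball. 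When $x\in U_\rho$ and $r<\rho$, the right-hand side is dominated by $C_N r\,\mathcal{M}^\rho_{|\nabla f|}(x)$.

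To obtain the existence of $\tilde f(x)$ and the Lebesgue-point property whenever $\mathcal{M}^\rho_{|\nabla f|}(x)<\infty$, I would compare the averages at the successive dyadic radii $r_k=2^{-k}r$ for some fixed $r<\rho$. Since $B(x,r_{k+1})\subset B(x,r_k)$,
$$|f_{B(x,r_k)}-f_{B(x,r_{k+1})}|\leq \frac{|B(x,r_k)|}{|B(x,r_{k+1})|}\oint_{B(x,r_k)}|f-f_{B(x,r_k)}|\,dy\leq C'_N\, r_k\,\mathcal{M}^\rho_{|\nabla f|}(x).$$
Summing the resulting geometric series shows that $(f_{B(x,r_k)})_k$ is a Cauchy sequence, hence has a limit $\tilde f(x)$ which satisfies $|f_{B(x,r)}-\tilde f(x)|\leq C''_N\, r\,\mathcal{M}^\rho_{|\nabla f|}(x)$ for every $r<\rho$. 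Invoking the Poincar\'e inequality once more then gives $\oint_{B(x,r)}|f-\tilde f(x)|\,dy\to 0$ as $r\to 0$, i.e.\ $x$ is a Lebesgue point of $f$.

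For the pairwise estimate, set $d=|x-y|\le\rho/3$ and choose $r=2d<\rho$, so that both $B(x,r)$ and $B(y,r)$ are contained in $U$. Then decompose
$$|\tilde f(x)-\tilde f(y)|\leq |\tilde f(x)-f_{B(x,r)}|+|f_{B(x,r)}-f_{B(y,r)}|+|f_{B(y,r)}-\tilde f(y)|.$$
The outer terms are controlled by the previous paragraph. For the middle term I would exploit that $|x-y|=r/2$, so $W:=B(x,r)\cap B(y,r)$ has Lebesgue measure comparable to $|B(x,r)|$ with a constant depending only on $N$. Writing $f_{B(x,r)}-f_{B(y,r)}$ as the average over $W$ of the constant function $f_{B(x,r)}-f_{B(y,r)}$, inserting $\pm f(z)$, and applying the triangle inequality together with the Poincar\'e estimate on each of the two balls yields a bound by $C'''_N\, r\bigl(\mathcal{M}^\rho_{|\nabla f|}(x)+\mathcal{M}^\rho_{|\nabla f|}(y)\bigr)$. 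Substituting $r=2|x-y|$ and collecting constants gives the announced inequality.

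The only non-trivial ingredient is the Poincar\'e inequality; once it is available, the telescoping/Cauchy argument and the two-ball comparison are essentially bookkeeping, and I do not anticipate any serious obstacle beyond keeping careful track of the dimensional constants at each step.
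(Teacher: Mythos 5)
The paper does not prove this statement; it is quoted verbatim from Bojarski and Bojarski--Haj\l asz as a known result, so there is no internal proof to compare against. Your argument is correct and is essentially the standard proof from those references: the $L^1$-Poincar\'e inequality on balls, a dyadic telescoping to show the averages $f_{B(x,2^{-k}r)}$ form a Cauchy sequence with $|f_{B(x,r)}-\tilde f(x)|\leq C_N\,r\,\mathcal{M}^{\rho}_{|\nabla f|}(x)$ (whence the Lebesgue-point property), and the two-ball comparison through the intersection $B(x,r)\cap B(y,r)$ with $r=2|x-y|<\rho$. All the radii you use stay below $\rho$ under the hypothesis $|x-y|\leq\rho/3$, so every average of $|\nabla f|$ is indeed dominated by the truncated maximal function, and the bookkeeping closes.
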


\begin{theorem} (Hardy-Littlewood, Wiener. Ref.\ \cite{adams-hedberg}) Let  $\mu$ be a finite positive measure on $\R^N$ and let ${ \mathcal M}(\mu  )(x)=\sup_{B(a,r)\ni x} { \frac {  \mu  (B(a,r))} { | B(a,r) | } }$, $x \in   \R ^N$. Then for every $t>0$$$ |  \{  \mathcal M (\mu  )>t\,  \} |   \leq C\,  { \frac {  \;  \|\mu \|_1} {t }},$$
where $C$ depends only on the dimension $N$.
\end{theorem}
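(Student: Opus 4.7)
The plan is the classical Vitali-covering argument for the weak-type $(1,1)$ maximal inequality. First I would observe that the set $E_t := \{x \in \R^N : \mathcal M(\mu)(x) > t\}$ is open: if $x \in E_t$ then some open ball $B(a,r)$ with $x \in B(a,r)$ satisfies $\mu(B(a,r)) > t |B(a,r)|$, and the same ball witnesses $\mathcal M(\mu)(y) > t$ for every $y$ in a small enough neighborhood of $x$. By inner regularity of Lebesgue measure, it then suffices to bound $|K|$ for an arbitrary compact subset $K$ of $E_t$.

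For each $x \in K$ I would pick an open ball $B_x = B(a_x, r_x) \ni x$ with $\mu(B_x) > t |B_x|$. Compactness yields a finite subcover $B_{x_1}, \ldots, B_{x_m}$ of $K$. The key step is the Vitali selection lemma applied to this finite collection: by a greedy procedure (iteratively picking a ball of maximal radius among those disjoint from the previously chosen ones) one extracts a disjoint subfamily $\{B_{x_{i_j}}\}_j$ such that the concentric dilates $5\, B_{x_{i_j}}$ still cover $\bigcup_i B_{x_i}$, and in particular cover $K$.

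Combining the covering, the scaling $|5B| = 5^N |B|$, the defining inequality $t\,|B_{x_{i_j}}| < \mu(B_{x_{i_j}})$, and the disjointness of the selected balls, one obtains
\begin{equation*}
|K| \;\leq\; \sum_j |5\, B_{x_{i_j}}| \;=\; 5^N \sum_j |B_{x_{i_j}}| \;<\; \frac{5^N}{t} \sum_j \mu(B_{x_{i_j}}) \;\leq\; \frac{5^N}{t}\, \|\mu\|_1,
\end{equation*}
where the last inequality uses positivity of $\mu$ together with the pairwise disjointness of the $B_{x_{i_j}}$. Taking the supremum over all compact $K \subset E_t$ yields the stated bound with $C = 5^N$, a constant depending only on the dimension $N$.

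There is no serious obstacle here; the only step requiring nontrivial input is the Vitali selection lemma, which I would invoke as a black box rather than reprove. If one wishes to sharpen the constant, one could substitute a Besicovitch covering argument, replacing $5^N$ by a smaller purely dimensional constant, but this is not needed for the qualitative conclusion used in the sequel.
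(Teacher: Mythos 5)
Your proof is correct and complete: it is the standard Vitali-covering proof of the weak-type $(1,1)$ inequality for the uncentered maximal function, and every step (openness of the superlevel set, reduction to a compact subset by inner regularity, finite subcover, greedy selection of disjoint balls whose $5$-fold dilates still cover, and the final summation using disjointness and positivity of $\mu$) is sound. The paper itself supplies no proof of this statement --- it is quoted as a classical black box with a reference to Adams--Hedberg --- so there is nothing to compare against; your argument is exactly the textbook one that the cited reference contains. The only cosmetic remark is that for a \emph{finite} subfamily the greedy selection already yields covering by the $3$-fold dilates, so the constant can be taken to be $3^N$ rather than $5^N$, but this is immaterial here.
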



Consider now for $0< \alpha <N$,  the Riesz kernel $I_\alpha (x) = |x|^{\alpha - N}$  of order $\alpha$ in $\R^N$. For a finite Radon measure on $\R^N$ the Riesz potential $I_\alpha(\mu) $ is defined by $I_ \alpha (\mu  )(x):=I_ \alpha \ast  \mu  (x)=\int  | x-y | ^{ \alpha -N}\, d\mu  (y)$ for $x \in   { \mathbb R} ^N$.

\begin{theorem} \label{thm:riesz-estimate} (Zygmund, cf. \cite{adams-hedberg} p.\ 56) Let $\mu  $ be a finite positive Radon measure on $\R^N$ and assume that $ 0<\alpha  < N$. Then there is a constant $A$ depending only on $\alpha$ and $N$ such that, for every $t>0$,
$$ |  \{I_ \alpha (\mu) \geq t \} |  \leq  \frac { A} {t ^{\frac{N}{N- \alpha} }} \|\mu \|_1^{\frac{N}{N- \alpha}}.$$
\end{theorem}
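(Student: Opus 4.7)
The plan is to prove the weak-type bound by the classical Hedberg trick: I would first obtain a pointwise inequality dominating $I_\alpha(\mu)(x)$ by a power of the Hardy-Littlewood maximal function $\mathcal M(\mu)(x)$ from the previous theorem, and then feed that into the weak-$(1,1)$ estimate for $\mathcal M$ already stated in the excerpt.

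For the pointwise step, fix $x \in \R^N$ and $r>0$, and split
$$I_\alpha(\mu)(x) = \int_{|x-y|<r} |x-y|^{\alpha-N}\,d\mu(y) + \int_{|x-y| \geq r} |x-y|^{\alpha-N}\,d\mu(y).$$
The outer integral is immediately bounded by $r^{\alpha-N}\|\mu\|_1$. For the inner integral I decompose $B(x,r)$ into dyadic annuli $A_k=\{r2^{-k-1}\leq |x-y|<r2^{-k}\}$, $k\geq 0$. On each $A_k$ the kernel is at most $(r2^{-k-1})^{\alpha-N}$, while the mass satisfies $\mu(A_k)\leq \mu(B(x,r2^{-k}))\leq c_N (r2^{-k})^N\,\mathcal M(\mu)(x)$ directly from the definition of $\mathcal M$. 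Summing gives a geometric series with ratio $2^{-\alpha}<1$ (this is where the hypothesis $\alpha>0$ is used), so the inner integral is bounded by $C(\alpha,N)\,r^\alpha\,\mathcal M(\mu)(x)$. Hence
$$I_\alpha(\mu)(x) \leq C\,r^\alpha\,\mathcal M(\mu)(x) + r^{\alpha-N}\,\|\mu\|_1.$$
Optimizing by choosing $r^N = \|\mu\|_1/\mathcal M(\mu)(x)$ (and disposing of the trivial case $\mathcal M(\mu)(x)=0$ by letting $r\to\infty$, which forces $\mu\equiv 0$) yields the Hedberg-type pointwise bound
$$I_\alpha(\mu)(x) \leq C'\,\|\mu\|_1^{\alpha/N}\,\mathcal M(\mu)(x)^{(N-\alpha)/N}.$$

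Given this inequality, $\{I_\alpha(\mu)\geq t\}\subset\{\mathcal M(\mu)\geq \tau\}$ where $\tau=(t/C')^{N/(N-\alpha)}\|\mu\|_1^{-\alpha/(N-\alpha)}$, and the Hardy-Littlewood-Wiener theorem quoted in the excerpt gives $|\{\mathcal M(\mu)\geq\tau\}|\leq C\|\mu\|_1/\tau$. Substituting and using the algebraic identity $1+\tfrac{\alpha}{N-\alpha}=\tfrac{N}{N-\alpha}$ produces the required bound $A\,\|\mu\|_1^{N/(N-\alpha)}\,t^{-N/(N-\alpha)}$. The only genuinely non-routine step is the dyadic decomposition establishing the pointwise Hedberg inequality; once that is in place the remainder is bookkeeping and a direct citation of the maximal-function estimate.
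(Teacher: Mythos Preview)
Your argument is correct and is exactly Hedberg's proof, which is what the paper points to (it does not give a standalone proof of this theorem but cites \cite{adams-hedberg} and then reproduces Hedberg's pointwise estimate inside the proof of the next, stronger Theorem~\ref{thm:maximal-riesz-estimate}). The only cosmetic difference is in how the local piece $\int_{|x-y|<r}|x-y|^{\alpha-N}\,d\mu(y)$ is bounded by $C r^{\alpha}\mathcal M(\mu)(x)$: you use a dyadic annular decomposition, whereas the paper (following Hedberg) uses the layer-cake/Fubini identity to write it as $\frac{\mu(B(x,r))}{r^{N-\alpha}}+(N-\alpha)\int_0^r \frac{\mu(B(x,t))}{t^{N+1-\alpha}}\,dt$ and then bounds $\mu(B(x,t))\leq c_N t^N\mathcal M(\mu)(x)$ inside the integral; both routes are standard and yield the same inequality, and the optimization in $r$ and the final appeal to Hardy--Littlewood--Wiener are identical.
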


In fact we  need a slightly improved version of this estimate with $I_ \alpha (\mu  )$  replaced by its maximal function. 

\begin{theorem} \label{thm:maximal-riesz-estimate}
Let $\mu  $ be a finite positive measure on $\R^N$,  $ 0<\alpha <N$,  and  let ${ \mathcal I}_ \alpha \mu  (x)=\mathcal M_{I_ \alpha (\mu)  }(x)$ be the maximal function of $I_ \alpha (\mu  )$. Then there is a constant $A$ depending only on $\alpha$ and $N$ such that, for every $t>0$,
\begin{equation}
|  \{  \mathcal I_ \alpha (\mu  ) \geq t \} |  \leq \,{ \frac {  A} {t ^{\frac{N}{N- \alpha} }}}\; \|\mu \|_1^{\frac{N}{N- \alpha}}.
\end{equation}
\end{theorem}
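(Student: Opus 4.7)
My plan is to prove the pointwise inequality
\[
\mathcal I_\alpha(\mu)(x) \leq C_{N,\alpha}\, I_\alpha(\mu)(x)
\]
and then deduce Theorem \ref{thm:maximal-riesz-estimate} by inserting this into Theorem \ref{thm:riesz-estimate} (with $t$ replaced by $t/C_{N,\alpha}$, which only changes the constant $A$).

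To prove the pointwise bound, fix $x \in \R^N$ and an arbitrary ball $B(a,r)$ containing $x$. By Fubini,
\[
\oint_{B(a,r)} I_\alpha(\mu)(z)\, dz \;=\; \int_{\R^N} \Big( \oint_{B(a,r)} |z-y|^{\alpha-N}\, dz \Big)\, d\mu(y),
\]
so the heart of the argument is to show that the inner average is bounded by $C |x-y|^{\alpha-N}$ uniformly in $a,r,y$ (with $|x-a| \leq r$). I would split the $y$-integration into the near zone $y \in B(a,2r)$ and the far zone $y \notin B(a,2r)$.

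In the far zone, for every $z \in B(a,r)$ one has $|z-y| \sim |a-y| \sim |x-y|$ (the first equivalence since $|z-a| \leq r \leq |a-y|/2$, the second since $|x-a| \leq r \leq |a-y|/2$), hence $|z-y|^{\alpha-N} \leq C |x-y|^{\alpha-N}$ and the inner average inherits the same bound. In the near zone, a rearrangement-type estimate gives
\[
\int_{B(a,r)} |z-y|^{\alpha-N}\, dz \;\leq\; \int_{B(y,r)} |z-y|^{\alpha-N}\, dz \;=\; C_N\, r^{\alpha},
\]
so the average is $\leq C r^{\alpha - N}$; on the other hand $|x-y| \leq |x-a|+|a-y| \leq 3r$, and since $\alpha - N < 0$ this forces $r^{\alpha-N} \leq 3^{N-\alpha}|x-y|^{\alpha-N}$. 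Combining the two zones and using Fubini yields $\oint_{B(a,r)} I_\alpha(\mu) \leq C\, I_\alpha(\mu)(x)$, and taking the supremum over balls $B(a,r)\ni x$ gives $\mathcal I_\alpha(\mu)(x) \leq C\, I_\alpha(\mu)(x)$.

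The main (and essentially only) obstacle is the near-zone estimate, where one has to notice that although $|z-y|^{\alpha-N}$ is integrably singular on $B(a,r)$ and cannot be controlled pointwise by $|x-y|^{\alpha-N}$, its $B(a,r)$-average can: this is exactly the place where one uses that $\alpha < N$ (to make $r^\alpha$ finite) and that $x \in B(a,r)$ (to convert powers of $r$ into powers of $|x-y|$). Everything else is a direct substitution into Theorem \ref{thm:riesz-estimate}.
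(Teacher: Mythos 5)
Your argument is correct, but it takes a genuinely different route from the paper. You establish the pointwise domination $\mathcal M_{I_\alpha(\mu)}(x)\le C_{N,\alpha}\,I_\alpha(\mu)(x)$ — i.e.\ that the Riesz potential of a \emph{positive} measure is essentially an $A_1$ weight — by splitting the $y$-integration into a far zone, where $|z-y|\sim|x-y|$ uniformly on the ball, and a near zone, where the bathtub/rearrangement bound $\int_{B(a,r)}|z-y|^{\alpha-N}\,dz\le C_N r^\alpha$ together with $|x-y|\le 3r$ and $\alpha-N<0$ does the job; the theorem then falls out of Theorem \ref{thm:riesz-estimate} by rescaling $t$. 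The paper instead never compares $\mathcal M_{I_\alpha(\mu)}$ with $I_\alpha(\mu)$: it first shows $\mathcal M(\mu\ast\chi_r)\le 2^N\mathcal M(\mu)$ via the convolution inequality $\chi_s\ast\chi_r\le 2^N\chi_{2r}$, then reruns Hedberg's two-zone estimate on the mollified measure to get $\mathcal I_\alpha(\mu)(x)\le C\,\|\mu\|_1^{\alpha/N}\bigl(\mathcal M(\mu)(x)\bigr)^{1-\alpha/N}$, and finally invokes the Hardy--Littlewood--Wiener weak $(1,1)$ bound for $\mathcal M(\mu)$. Your version is shorter and arguably more transparent: it reduces Theorem \ref{thm:maximal-riesz-estimate} to Theorem \ref{thm:riesz-estimate} as stated (used as a black box) and yields the stronger pointwise statement as a by-product; the paper's version is self-contained modulo Hardy--Littlewood--Wiener and stays entirely within Hedberg's framework. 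Both arguments rely essentially on the positivity of $\mu$ (yours through the Tonelli step and the sign of the kernel), so neither is more general in that respect. One small point of hygiene: the paper's $\mathcal M_f$ is the centered maximal function, while you bound averages over arbitrary balls containing $x$; since the uncentered maximal function dominates the centered one, this only strengthens your conclusion.
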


\begin{proof} {\bf A.} We first note an  elementary fact. Denote
$$\chi_r= | B(0,r) | ^{-1} \,\mathbf 1_{B(0,r)}.$$

For  $0<s \leq r$  we have  $\chi_s\ast \chi_r \leq \,2^N\,\chi _{2r}.$ Indeed, letting $V_N$ to denote the volume of the unit ball in $\R^N$, we have  $\chi _r \leq V_N^{-1}r^{-N}$ and then $\chi_s\ast \chi_r \leq V_N^{-1}r^{-N}$ because $\chi_s$ is of integral $1$.
 On the other hand, $\chi_s\ast \chi _r$ vanishes outside  $ B(0,2r)$ and $\chi_{2r}=V_N^{-1} (2r)^{-N}$ in $B(0,2r)$, from where  the stated inequality follows.

In particular we have $\mu  \ast \chi_r\ast \chi _s \leq 2^ N \, \mu  *\chi_{2r} \leq  2^ N\, { \mathcal M}(\mu  )$ for $0<s \leq r$.  Thus using the commutativity of the convolution,  $\mu  \ast \chi_r\ast \chi _s (x)\, \leq 2^ N \, {\mathcal M}(\mu  )(x)$ for every $r,\, s>0$ and on taking the supremum over $s$ we obtain $${ \mathcal M}(\mu \ast \chi_r  )  \leq 2^ N\, { \mathcal M}(\mu  ).$$

Observe that  $I_ \alpha (\mu  )\ast \chi_r=I_ \alpha *(\mu  *\chi_r)$.
\medskip

{\bf B.} Next we adapt the argument of Hedberg's proof of Theorem \ref{thm:riesz-estimate} (see \cite{adams-hedberg} p. 56). Dividing the integral defining $I_ \alpha (\mu  )(x)$ in two parts  we get (exactly as in Hedberg's proof) 
\begin{align}
 \nonumber     I_ \alpha (\mu) (x)&=  \int _{y\notin B(x, \delta )} {\frac  { d\mu  (y) }{ | x-y | ^{N-\alpha }}} +\int _{y \in   B(x, \delta )}  {\frac  { d\mu  (y) }{ | x-y | ^{N-\alpha }}} \\
   \nonumber & \leq  { \frac {  \mu  (B(x, \delta ))} {  \delta ^{N- \alpha }  }} \,+\, (N- \alpha ) \int _0^ \delta \, { \frac {  \mu  (B(x,t))} {t^{N+1- \alpha } }}\, dt \;+ \;  A\, \delta ^ {\alpha -N}\, \|\mu   \|_1 \\
 \nonumber & \leq  \; \delta ^ \alpha \, { \mathcal M}(\mu  )(x)+ \;  A' \delta ^ \alpha \, { \mathcal M}(\mu  )(x)+ \;  A\, \delta ^ {\alpha -N} \, \|\mu  \|_1 \\
 \nonumber & \leq  C\,  ( \delta ^ \alpha \, { \mathcal M}(\mu  )(x)+ \;   \delta ^{\alpha -N} \, \|\mu  \|_1).
 \end{align}

So we have
\begin{align}
   \nonumber    I_ \alpha (\mu)\ast \chi _r (x)& 
\leq  C\,  ( \delta ^ \alpha \, { \mathcal M}(\mu\ast \chi_r  )(x)+ \;   \delta ^{\alpha -N} \, \|\mu \ast \chi_r  \|_1)
\\ \nonumber & \leq C'\,  ( \delta ^ \alpha \, { \mathcal M}(\mu  )(x)+ \;   \delta ^{\alpha -N} \, \|\mu \|_1),\end{align}
where we used the inequality  from  part A.

Taking the supremum over $r$ we obtain $$\mathcal I_ \alpha (x) = { \mathcal M}(I_ \alpha (\mu  ))(x) \leq C'\,  ( \delta ^ \alpha \, { \mathcal M}(\mu  )(x)+ \;   \delta ^{\alpha -N} \, \|\mu  \|_1),$$  
and setting the constant to be $ \delta := (\|\mu \|_1 \,/ \,{ \mathcal M}(\mu  )(x))^{ \frac {  1} {N }}$ we get
$$ \mathcal I_ \alpha (x) \leq  C\,  \|\mu \|_1^{ \alpha /N} \; ({ \mathcal M}(\mu  )(x))^{1-( \alpha /N)}.$$

Finally,  by the Hardy-Littlewood-Wiener Theorem,
\begin{align}
   \nonumber    |  \{ \mathcal I_\alpha  >t \} |  & \leq  |  \{ C\,  \|\mu \|_1^{ \alpha /N} \; ({ \mathcal M}(\mu))^{1-( \alpha /N)} >t\,  \} |
\\ \nonumber & =  |  \{ ({ \mathcal M}(\mu))^{1-( \alpha /N)} >C^{-1}\,t\, \|\mu \|_1^{ -\alpha /N}  \} |
\\ \nonumber &  =    |  \{{ \mathcal M}(\mu) >C'\, t^{ \frac {  N} {N- \alpha  }}\, \|\mu \|_1^{ -\alpha /N- \alpha }  \} |
\\ \nonumber & \leq  \frac { A} {t ^{\frac{N}{N- \alpha} }} \|\mu \|_1^{\frac{N}{N- \alpha}}.
 \end{align} 
\end{proof}

\section{Integration of p.s.h.\ functions}

This section is devoted to the proof of Theorem \ref{thm:integrability} and some related results.

We will need the following simple extension  of the second part of Lemma \ref{lemma:loc-bdd}.

\begin{lemma} \label{lemma:loc-bdd2}
Let $1\leq p \leq n$ and let  $S$ be a locally moderate closed positive current of type $(n-p-1,n-p-1)$ on $B_r$. If $\mathcal G$ is a compact family of p.s.h. functions on $B_r$ and $\mathcal H$ is locally uniformly bounded  family of  p.s.h.\ functions on $B_r$ then the mass of the measures $\ddc \varphi \wedge (\ddc u)^p \wedge S$, $\varphi \in \mathcal G, u \in \mathcal H$ are locally bounded in $B_r$ uniformly on $\varphi$ and $u$.
\end{lemma}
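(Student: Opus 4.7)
My plan is to argue by induction on $p \geq 0$, with the base case $p=0$ being the second assertion of Lemma \ref{lemma:loc-bdd} (applied to a locally moderate $(n-1,n-1)$-current). For the inductive step from $p-1$ to $p$, given a locally moderate $(n-p-1, n-p-1)$-current $S$, I would introduce the auxiliary current $\tilde S := S \wedge \omega$, which is closed positive of type $(n-p, n-p)$. A short computation gives $\sigma_{\tilde S} = \tilde S \wedge \omega^p = S \wedge \omega^{p+1} = \sigma_S$, so $\tilde S$ inherits local moderation from $S$. The inductive hypothesis, applied to $\tilde S$, then yields that the mass of $\ddc \varphi \wedge (\ddc u)^{p-1} \wedge \tilde S$ is locally bounded in $B_r$ uniformly for $\varphi \in \mathcal G$ and $u \in \mathcal H$.

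Next I would introduce $R := \ddc \varphi \wedge (\ddc u)^{p-1} \wedge S$, a closed positive $(n-1, n-1)$-current; this is well-defined by first forming $\ddc \varphi \wedge S$ via Lemma \ref{lemma:loc-bdd} and then wedging iteratively with $\ddc u$ using Bedford--Taylor (local boundedness of $u$ is what allows this). Its trace measure satisfies
$$\sigma_R = R \wedge \omega = \ddc \varphi \wedge (\ddc u)^{p-1} \wedge \tilde S,$$
so $\sigma_R(L)$ is uniformly bounded on any compact $L \subset B_r$ by the previous step.

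To conclude, I would fix a compact $K \subset B_r$ and a smooth cutoff $\chi$ with $\chi \equiv 1$ on $K$, $0 \leq \chi \leq 1$, and $\operatorname{supp}\chi \subset L$ for some compact $L \subset B_r$. Since $u$ is bounded on $L$ and $R$ is closed, $uR$ is a well-defined current and $\ddc(uR) = \ddc u \wedge R$; integrating by parts then gives
$$\int_K \ddc \varphi \wedge (\ddc u)^p \wedge S \leq \int \chi\, \ddc u \wedge R = \int \chi\, \ddc(uR) = \int u\, \ddc \chi \wedge R \leq \|u\|_{L^\infty(L)}\, \|\chi\|_{\mathcal C^2}\, \sigma_R(L),$$
and the right-hand side is uniformly bounded by the inductive estimate.

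The only technical points requiring care are the identity $\ddc(uR) = \ddc u \wedge R$, which rests on $R$ being closed and $u$ locally bounded plurisubharmonic (Bedford--Taylor), and the subsequent integration by parts, which is routine since $\chi$ is smooth and compactly supported; neither should present a serious obstacle.
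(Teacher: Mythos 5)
Your proof is correct and is essentially the paper's argument: the authors also peel off one factor of $\ddc u$ at a time by integrating by parts against nested cutoff functions, picking up a factor $\|u\|_{L^\infty}\|\chi\|_{\mathcal C^2}$ and an extra $\omega$ at each step, until only $\ddc\varphi\wedge S\wedge\omega^p$ remains, which is handled by Lemma \ref{lemma:loc-bdd}. You merely package this iteration as an induction on $p$ (using that $\sigma_{S\wedge\omega}=\sigma_S$, so local moderation passes to $S\wedge\omega$), which is a cosmetic difference.
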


\begin{proof}
Fix  a compact subset $K$  of $B_r$ and let $L_0 = K, L_1, \ldots,  L_{p}$ be compact subsets of $B_r$ such that $L_i$ is contained in the interior of $L_{i+1}$. Let $\chi_i$, $i=1,\ldots, p$ be smooth cut-off  functions such that $0\leq \chi_i \leq 1$, $\chi_i \equiv 1$ in $L_{i-1}$ and $\chi_i$ is supported in $L_{i}$. Then, for $\varphi \in \mathcal G$ and $u \in \mathcal H$, the mass of $\ddc \varphi \wedge (\ddc u)^p \wedge S$ over $K$ is bounded by
\begin{equation*}
\begin{split}
 \int_{L_1} \chi_1 \ddc \varphi \wedge (\ddc u)^p \wedge S &= \int_{L_1} u (\ddc \chi_1) \wedge \ddc \varphi \wedge (\ddc u)^{p-1} \wedge S \\
&\leq \|\chi_1\|_{\mathcal C^2} \|u\|_{L^\infty(L_1)} \int_{L_1} \ddc \varphi \wedge (\ddc u)^{p-1} \wedge S \wedge \omega \\
&\leq \|\chi_1\|_{\mathcal C^2} \|u\|_{L^\infty(L_1)} \int_{L_2} \chi_2 \ddc \varphi \wedge (\ddc u)^{p-1} \wedge S \wedge \omega \\
&=  \|\chi_1\|_{\mathcal C^2} \|u\|_{L^\infty(L_1)} \int_{L_2} u\, \ddc \chi_2 \wedge \ddc \varphi \wedge (\ddc u)^{p-2} \wedge S \wedge \omega\\
&\leq \|\chi_1\|_{\mathcal C^2} \|\chi_2\|_{\mathcal C^2} \|u\|_{L^\infty(L_1)} \|u\|_{L^\infty(L_2)} \int_{L_2} \ddc \varphi \wedge (\ddc u)^{p-2} \wedge S \wedge \omega^2 \\
& \leq \ldots \\
&\leq \|\chi_1\|_{\mathcal C^2} \cdots \|\chi_p\|_{\mathcal C^2} \|u\|_{L^\infty(L_1)} \cdots \|u\|_{L^\infty(L_p)} \int_{L_p} \ddc \varphi  \wedge S \wedge \omega^p,
\end{split}
\end{equation*}
where $\omega= \ddc \|z\|^2 $ is the standard fundamental form on $ {\mathbb C }^n$. The result now follows from Lemma \ref{lemma:loc-bdd} and the fact that $\|u\|_{L^\infty(L_i)}$ is bounded independently of $u$.
\end{proof}

\begin{proof}[Proof of Theorem \ref{thm:integrability}]
There is no loss of generality in assuming that $z = 0$ and since $\varphi$ is locally bounded from above we may also assume that $\varphi$ is negative. As before $\omega = \ddc \|z\|^2 $.

The proof is inspired by the methods in  \cite{dns:estimates}, Theorem 1.1. It will consist of  successive applications of integration by parts formulas (Lemma \ref{lemma:int-parts}) together with a regularization procedure.

For $N> 0$ define $\varphi_N = \max \{\varphi, - N\}$ and $\psi_N = \varphi_{N-1} - \varphi_N$. Notice that $0 \leq \psi_N \leq 1$, $\psi_N$ is supported in $\{\varphi < -N + 1\}$ and $\psi_N \equiv 1$ in $\{\varphi < -N \}$.

Observe that
\begin{equation} \label{eq:first-estimate}
\begin{split}
\int e^{-\varphi} (\ddc u)^n &= \sum_{N=0}^\infty \int_{\{-N \leq \varphi < -N +1\}} e^{-\varphi} (\ddc u)^n \leq \sum_{N=0}^\infty e^N \int_{\{-N \leq \varphi < -N +1\}} (\ddc u)^n \\
&\leq \sum_{N=0}^\infty e^N \int \psi_{N-1} (\ddc u)^n.
\end{split}
\end{equation}

From the hypothesis that $\nu(\varphi;0) < \frac{2 \alpha}{\alpha + n(2-\alpha)}$ and from the upper semicontinuity of the function $z \mapsto \nu(\varphi;z)$ there is an $r>0$ such that $\sup_{z \in B_{2r}} \nu ( \varphi  ,z)\leq \frac{2 \alpha}{\alpha + n(2-\alpha)} - \sigma$ for a small constant $\sigma > 0$. From Lemma \ref{lemma:sub-level-volume} we get that
\begin{equation} \label{eq:volume-estimate}
\lambda(B_{2r} \cap \{\varphi \leq -N + 1\}) \lesssim e^{-(\frac{\alpha + n(2-\alpha)}{\alpha} + \delta) N} = e^{-(1+\delta)N} e^{-\frac{n(2-\alpha)}{\alpha} N},
\end{equation}
where $\delta > 0$ is a small constant (depending on $\varphi $). Here and in what follows the sign $\lesssim$ means that the left-hand sign is smaller or equal than a constant times the right-hand side, the constant being independent from $N$. 

Taking a smaller $r$ if necessary we may assume that $u$ is defined on $B_{2r}$. Subtracting a constant we may assume that $u\leq -1$. Consider the function $v(z) = \max(u(z),A \log \|z\|)$. If we choose $A > 0$ sufficiently small, we see that $v$ coincides with $u$ near the origin and that $v(z) = A \log \|z\|$ near the boundary of $B_r$. This allows us to assume that $u(z) = A \log \|z\|$ on $B_r \setminus B_{r-4\rho}$ for some fixed $\rho < r/4$. Notice that, in particular, $u$ is smooth on $B_r \setminus B_{r-4\rho}$.

Fix a smooth cut-off function $\chi$ with compact support in $B_{r-\rho}$, $0 \leq \chi \leq 1$ and $\chi \equiv 1$ on $B_{r-2\rho}$. Applying Lemma \ref{lemma:int-parts} to $\psi_{N-1}$ and $(\ddc u)^{n-1}$ and noticing that $(\ddc u)^n = \ddc(u (\ddc u)^{n-1})$ we get
\begin{equation} \label{eq:step1}
\begin{split}
&\int_{B_r} \chi \psi_{N-1} (\ddc u)^n = \\ - &\int_{B_r \setminus B_{r-3\rho}} \ddc \chi \wedge \psi_{N-1} u (\ddc u)^{n-1} - \int_{B_r \setminus B_{1-3\rho}} \text{d} \chi \wedge \psi_{N-1} \text{d}^c u \wedge (\ddc u)^{n-1} \\
+& \int_{B_r \setminus B_{r-3\rho}} \text{d}^c \chi \wedge \psi_{N-1} \text{d}u \wedge (\ddc u)^{n-1} + \int_{B_{r-\rho}} \chi u \; \ddc \psi_{N-1} \wedge (\ddc u)^{n-1}.
\end{split}
\end{equation}

Observing that $u$ is smooth in $B_r \setminus B_{r-3\rho}$, that the support of $\psi_{N-1}$ is contained in $\{\varphi \leq -N + 1\}$ and using the volume estimate (\ref{eq:volume-estimate}) we get that the absolute values of the first three integrals on the right-hand side are $\leq c_1 e^{-(1+\delta)N} e^{-\frac{n(2-\alpha)}{\alpha} N}$, where $c_1>0$ does not depend on $N$.

For $N\geq 1$ set $\varepsilon = \varepsilon(N) =  e^{-(\frac{1}{\alpha} + c)N}$, where $0<c<\frac{\delta}{n(2- \alpha)}$. Using a convolution with a smooth $U(n)$-invariant approximation of identity one can choose for $N$ large a regularization $u_\varepsilon$ of $u$ defined on $B_{r-\rho}$ in such a way that $\|u - u_\varepsilon\|_\infty \lesssim \varepsilon^\alpha = e^{-(1+c \alpha)N}$ and $\|u_\varepsilon\|_{\mathcal C^2}:= \|u_\varepsilon\|_{\mathcal C^2(B_{r-\rho })}\lesssim \varepsilon^{\alpha-2}$.

Writing $u = u_\varepsilon + (u - u_\varepsilon)$ the last integral in (\ref{eq:step1}) is equal to
\begin{equation*}
\int_{B_{r-\rho}} \chi u_\varepsilon \; \ddc \psi_{N-1} \wedge (\ddc u)^{n-1} + \int_{B_{r-\rho}} \chi (u - u_\varepsilon) \; \ddc \psi_{N-1} \wedge (\ddc u)^{n-1}.
\end{equation*}

Since $\{\varphi_N\}_{N\geq 0}$ is a compact family of p.s.h.\ functions and since the current $(\ddc u)^{n-1}$  is locally moderate (Theorem \ref{thm:holder-moderate}), we see from Lemma \ref{lemma:loc-bdd}  that the modulus ot the second integral above is less than $c_2 \|u - u_\varepsilon\|_\infty \leq c'_2 \, e^{-(1+c \alpha)N}$ where $c'_2>0$ does not depend on $N$.

To deal with the remaining integral we apply Lemma \ref{lemma:int-parts} for $u_\varepsilon$ instead of $u$. Noticing that $\ddc (u_\varepsilon \wedge (\ddc u)^{n-1}) = \ddc u_\varepsilon \wedge (\ddc u)^{n-1}$ we get
\begin{equation*}
\begin{split}
&\int_{B_{r-\rho}} \chi u_\varepsilon \; \ddc \psi_{N-1} \wedge (\ddc u)^{n-1} = \\ &\int_{B_r \setminus B_{r-3\rho}} \ddc \chi \wedge \psi_{N-1} u_\varepsilon (\ddc u)^{n-1} + \int_{B_r \setminus B_{r-3\rho}} \text{d} \chi \wedge \psi_{N-1} \text{d}^c u_\varepsilon \wedge (\ddc u)^{n-1} \\
- &\int_{B_r \setminus B_{r-3\rho}} \text{d}^c \chi \wedge \psi_{N-1} \text{d}u_\varepsilon \wedge (\ddc u)^{n-1} + \int_{B_r} \chi \psi_{N-1} \; \ddc u_\varepsilon \wedge (\ddc u)^{n-1}.
\end{split}
\end{equation*}

Since $u(z) = A \log \|z\|$ on $B_r \setminus B_{r-4\rho}$ the $\mathcal C^2$ norm of $u_\varepsilon$ on $B_r \setminus B_{r-3\rho}$ does not depend on $\varepsilon = \varepsilon(N)$. Together with the volume estimate (\ref{eq:volume-estimate}) this implies that the first three integrals in the right-hand side have absolute values less than  $ c_3 \, e^{-(1+\delta)N} e^{-\frac{n(2-\alpha)}{\alpha} N}$ where $c_3>0$ does not depend on $N$.

For the last integral we write $\ddc u_\varepsilon \wedge (\ddc u)^{n-1} = \ddc (u (\ddc u_\varepsilon \wedge (\ddc u)^{n-2})$ and apply Lemma \ref{lemma:int-parts} for $S=\ddc u_\varepsilon \wedge (\ddc u)^{n-2}$. This gives us four integrals. Three of them are integrals over $B_r \setminus B_{r-3\rho}$ involving $u$, $u_\varepsilon$, $\psi_{N-1}$ and its derivatives. As above, the absolute value of each one of  them is $\lesssim e^{-(1+\delta)N} e^{-\frac{n(2-\alpha)}{\alpha} N}$. The remaining integral is
\begin{equation*}
\int_{B_{r-\rho}} \chi u \ddc \psi_{N-1} \wedge \ddc u_\varepsilon \wedge (\ddc u)^{n-2},
\end{equation*}
which we write again as
\begin{equation} \label{eq:step3}
\int_{B_{r-\rho}} \chi u_\varepsilon \ddc \psi_{N-1} \wedge \ddc u_\varepsilon \wedge (\ddc u)^{n-2} + \int_{B_{r-\rho}} \chi (u - u_\varepsilon) \ddc \psi_{N-1} \wedge \ddc u_\varepsilon \wedge (\ddc u)^{n-2}.
\end{equation}

Since $u_\varepsilon$ converges to $u$ in $L^\infty$, Lemma \ref{lemma:loc-bdd2} implies that the mass of $\ddc \psi_{N-1} \wedge \ddc u_\varepsilon \wedge (\ddc u)^{n-2}$  is bounded independently of $N$ and $\varepsilon$. Therefore, the modulus of the second integral above is less than $c_4 \|u - u_\varepsilon\|_\infty \leq c'_4 \, e^{-(1+c \alpha)N}$ where $c'_4>0$ does not depend on $N$.

To deal with the the first integral in (\ref{eq:step3}) we apply Lemma \ref{lemma:int-parts}, obtaining three integrals over $B_r \setminus B_{r-3\rho}$ whose absolute values are $\lesssim e^{-(1+\delta)N} e^{-\frac{n(2-\alpha)}{\alpha} N}$ and the integral
\begin{equation*}
\int_{B_{r-\rho}} \chi \psi_{N-1} (\ddc u_\varepsilon)^2 \wedge (\ddc u)^{n-2}.
\end{equation*}

We can repeat the above procedure in order ``move'' the $\ddc$'s from $u$ to $u_\varepsilon$. We  get at each step integrals with absolute values $\lesssim e^{-(1+\delta)N} e^{-\frac{n(2-\alpha)}{\alpha} N}$ or $\lesssim e^{-(1+c\alpha)N}$ (where the constants involved don't depend on $N$) and at the final step we get the integral
\begin{equation*}
\int_{B_{r-\rho}} \chi \psi_{N-1} (\ddc u_\varepsilon)^n,
\end{equation*}
whose absolute value is less than $ c_5 \|u_\varepsilon\|_{\mathcal C^2}^n \cdot \lambda\{\varphi \leq -N + 1\} \leq c'_5 \,\varepsilon^{n(\alpha - 2)} e^{-(1+\delta)N} e^{-\frac{n(2-\alpha)}{\alpha} N} = c'_5 \, e^{(cn(2-\alpha) - (1+ \delta))N}$, with $c'_5$ independent from $N$.

Altogether the above estimates yield
\begin{equation*}
\int_{B_{r-\rho}} \chi \psi_{N-1} (\ddc u)^n \lesssim e^{-(1+\delta)N} e^{-\frac{n(2-\alpha)}{\alpha} N} +  e^{-(1+c\alpha)N} + e^{(cn(2-\alpha) - (1+ \delta))N} .
\end{equation*}

Inserting  these estimates  in (\ref{eq:first-estimate}) we finally get
\begin{equation*}
\begin{split}
\int_{B_r} e^{-\varphi} (\ddc u)^n \lesssim \sum_{N=0}^\infty e^N\left [e^{-(1+\delta)N} e^{-\frac{n(2-\alpha)}{\alpha} N} +  e^{-(1+c\alpha)N} + e^{(cn(2-\alpha) - (1+ \delta))N} \right ] \\
= \sum_{N=0}^\infty \left [e^{-\delta N-\frac{n(2-\alpha)}{\alpha} N} + e^{-c \alpha N} + e^{(cn(2-\alpha) - \delta)N} \right ].
\end{split}
\end{equation*}

By the choice of $c$ all the factors of $N$ in the exponentials above are negative, so the series converges and hence the integral $\int e^{-\varphi} (\ddc u)^n$ is finite.
\end{proof}

From Theorem \ref{thm:integrability} and a computation analogous to the one made in the proof of Lemma \ref{lemma:sub-level-volume} follows an estimate of the measure of the sub-level sets of p.s.h.\ functions with respect to Monge-Ampère masses with Hölder continuous potential.

\begin{corollary}
Let $\varphi$ be a p.s.h. function on an open set $\Omega \subset \C^n$ and $\mu = (dd^c u)^n$ a Monge-Ampère mass on $\Omega$ with $u$ an $\alpha$-Hölder continuous p.s.h.\ function. If $K \subset \Omega$ is a compact subset then for every $\gamma < \frac{2 \alpha}{\alpha + n(2-\alpha)} \frac{1}{ \sup_{z \in K} \nu(\varphi;z)}$ there is a constant $C_\gamma = C_\gamma(\varphi,\Omega,K)$ such that
\begin{equation*}
\mu(K \cap \{\varphi \leq -M \}) \leq C_\gamma e^{-\gamma M}, \;\; M \in \R.
\end{equation*}
\end{corollary}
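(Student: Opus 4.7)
The plan is to mimic Kiselman's argument in Lemma \ref{lemma:sub-level-volume}, only replacing the use of Skoda's original theorem by Theorem \ref{thm:integrability}. The key observation is that on the set $K \cap \{\varphi \leq -M\}$ one has $e^{\gamma(-M-\varphi)} \geq 1$, hence
\begin{equation*}
\mu(K \cap \{\varphi \leq -M\}) \leq \int_K e^{\gamma(-M-\varphi(z))} d\mu(z) = e^{-\gamma M} \int_K e^{-\gamma \varphi} d\mu,
\end{equation*}
so it is enough to prove that $C_\gamma := \int_K e^{-\gamma \varphi} d\mu$ is finite, and then set this as the required constant.

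To prove finiteness of $C_\gamma$, I would apply Theorem \ref{thm:integrability} to the p.s.h.\ function $\gamma \varphi$ (note that $\gamma > 0$ since $\nu(\varphi;z) \geq 0$). Using the homogeneity $\nu(\gamma\varphi;z) = \gamma\,\nu(\varphi;z)$ of the Lelong number, the hypothesis on $\gamma$ gives, for every $z \in K$,
\begin{equation*}
\nu(\gamma\varphi;z) \leq \gamma \sup_{w \in K} \nu(\varphi;w) < \frac{2\alpha}{\alpha + n(2-\alpha)}.
\end{equation*}
Theorem \ref{thm:integrability} then furnishes, for each $z \in K$, an open neighborhood $V_z \subset \Omega$ on which $\int_{V_z} e^{-\gamma\varphi}(dd^c u)^n < \infty$.

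Finally, extract from $\{V_z\}_{z \in K}$ a finite subcover $V_{z_1},\ldots,V_{z_m}$ of the compact set $K$ and conclude
\begin{equation*}
\int_K e^{-\gamma\varphi}\, d\mu \;\leq\; \sum_{i=1}^m \int_{V_{z_i}} e^{-\gamma\varphi}\,(dd^c u)^n \;<\; +\infty.
\end{equation*}
There is no real obstacle here: the only point worth noting is that $\sup_{z \in K}\nu(\varphi;z)$ is finite (by upper semicontinuity of $\nu(\varphi;\cdot)$ and compactness of $K$), which gives the strict inequality above the strict bound on $\gamma$ and justifies the uniform application of Theorem \ref{thm:integrability} across $K$.
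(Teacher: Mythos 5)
Your proposal is correct and is exactly the argument the paper intends: the corollary is stated as following from Theorem \ref{thm:integrability} together with the Chebyshev-type computation of Lemma \ref{lemma:sub-level-volume}, which is precisely what you carry out (including the homogeneity $\nu(\gamma\varphi;z)=\gamma\nu(\varphi;z)$ and the finite-subcover step that the paper leaves implicit).
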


Another theorem of Skoda concerns the non-integrability of a p.s.h.\ function with large Lelong number: if $\nu(\varphi;0) > 2n $ then $e^{-\varphi}$ is not integrable in any neighborhood of the origin with respect to Lebesgue measure (see \cite{hormander:convexity} Lemma 4.3.1). One cannot hope for a similar result with respect to every Monge-Ampère measure with Hölder continuous potential, because the measure $\mu = (\ddc u)^n$ can be arbitrarily small near $0$ (and even zero), making the integral $\int e^{-\varphi} d\mu$ finite. We may note however  the following fact.

\begin{proposition} \label{prop:non-integrabilty}
Fix $0<\alpha \leq1$.  There exists a Monge-Ampère mass $\mu = (\ddc u)^n$ where $u$ is an $\alpha$-Hölderian p.s.h.\ function such that for every p.s.h.\ function $\varphi$ defined near  $0$ with $\nu(\varphi;0) > n \alpha $ we have $\int_K e^{-\varphi} d\mu = + \infty$ for every neighborhood $K$ of the origin. 
\end{proposition}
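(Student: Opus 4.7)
The plan is to exhibit a single explicit candidate and verify everything by direct computation. I propose
\[
u(z) := |z|^{\alpha} \qquad (z \in \C^n).
\]
This $u$ is plurisubharmonic since $|z|^\alpha = e^{\alpha \log|z|}$ is the composition of the convex increasing function $t \mapsto e^{\alpha t}$ with the standard p.s.h.\ function $\log|z|$. It is $\alpha$-H\"older continuous on any bounded set via the elementary chain
\[
\bigl||z|^\alpha-|w|^\alpha\bigr| \le \bigl||z|-|w|\bigr|^\alpha \le |z-w|^\alpha,
\]
valid for $\alpha \in (0,1]$ by subadditivity of $t \mapsto t^\alpha$. Since $u$ is continuous, the Monge-Amp\`ere measure $\mu := (\ddc u)^n$ is well defined in the Bedford-Taylor sense and, being associated to a continuous p.s.h.\ potential, it carries no mass on the pluripolar set $\{0\}$.

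Next I would compute $\mu$ explicitly. Off the origin, $u$ is smooth and radial: writing $u = g(|z|^2)$ with $g(t) = t^{\alpha/2}$, the complex Hessian reads $u_{j\bar k} = g'(|z|^2)\,\delta_{jk} + g''(|z|^2)\,\bar z_j z_k$, and the rank-one determinant identity gives $\det(u_{j\bar k}) = (g')^{n-1}(g' + |z|^2 g'')$. Substituting $g(t) = t^{\alpha/2}$ one checks $g' + t g'' = (\alpha/2)^2\, t^{\alpha/2 - 1}$, whence, up to a positive constant $c(n,\alpha)$,
\[
\mu = c(n,\alpha)\,|z|^{n(\alpha-2)}\,dV \qquad \text{on } \C^n,
\]
where $dV$ is Lebesgue measure; the identity extends across $\{0\}$ because $\mu$ puts no mass there.

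Finally I would deduce non-integrability. Given $\varphi$ p.s.h.\ near $0$ with $\nu(\varphi;0) > n\alpha$, the sup-characterization of the Lelong number recalled in the introduction supplies a $\gamma \in (n\alpha,\,\nu(\varphi;0))$ with $\varphi(z) \le \gamma \log|z| + O(1)$ near $0$, and therefore $e^{-\varphi(z)} \gtrsim |z|^{-\gamma}$ on a punctured neighborhood of the origin. For any neighborhood $K$ of $0$, choosing a small ball $B_r \subset K$ and passing to polar coordinates,
\[
\int_K e^{-\varphi}\,d\mu \;\gtrsim\; \int_{B_r} |z|^{-\gamma - n(2-\alpha)}\,dV \;\simeq\; \int_0^r t^{\,n\alpha - \gamma - 1}\,dt \;=\; +\infty,
\]
since the choice $\gamma > n\alpha$ forces the exponent to be strictly less than $-1$.

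The only delicate step is the Monge-Amp\`ere computation: one has to carry out the Hessian determinant carefully on $\C^n \setminus \{0\}$ and then invoke the continuity of $u$ together with the pluripolarity of $\{0\}$ to justify that the explicit smooth density on the punctured space represents $\mu$ as a measure on all of $\C^n$. Everything else is routine: the H\"older bound is a one-line inequality, and the lower bound on $e^{-\varphi}$ is immediate from the definition of the Lelong number.
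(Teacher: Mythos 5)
Your proof is correct and follows essentially the same route as the paper: the same potential $u(z)=\|z\|^\alpha$, the same computation $(\ddc u)^n = c(n,\alpha)\,\|z\|^{n(\alpha-2)}\,d\lambda$, and the same lower bound $e^{-\varphi}\gtrsim \|z\|^{-\gamma}$ from the Lelong number. Your version is if anything slightly more careful (choosing $\gamma$ strictly between $n\alpha$ and $\nu(\varphi;0)$, and justifying the extension of the density across the origin), so no changes are needed.
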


\begin{proof}
Let $\gamma = \nu(\varphi;0) > n \alpha$. Since $\varphi(z) \leq \gamma \log \|z\| + O(1)$ near $0$  (see Introduction) we have that $e^{-\varphi(z)} \geq C \frac{1}{\|z\|^{\gamma}} \geq C \frac{1}{\|z\|^{n \alpha}}$. If we take $u(z) = \|z\|^\alpha$ then a direct computation shows that $(\ddc u)^n = C^{st} \|z\|^{n(\alpha - 2)} \cdot \lambda$ in the sense of currents. We thus have
\begin{equation*}
\int e^{-\varphi} (\ddc u)^n \geq C^{st} \int \frac{1}{\|z\|^{n\alpha}} \frac{1}{\|z\|^{n(2 -\alpha)}}d\lambda = C^{st} \int \frac{1}{\|z\|^{2n}}d\lambda,
\end{equation*}
and the last integral diverges in any neighborhood of the origin.

\end{proof}

\begin{remark} \label{rmk:n=1-sharp}
For $n=1$ the condition on the Lelong number of $\varphi$ on the hypothesis of Theorem \ref{thm:integrability} is $\nu(\varphi;z) < \alpha$. This bound is sharp as Proposition \ref{prop:non-integrabilty} shows.



\end{remark}

\begin{remark} \label{rmk:logz1}
For $n \geq 2$ the condition $\nu(\varphi;z) < \frac{2\alpha}{\alpha + n (2-\alpha)}$ in Theorem \ref{thm:integrability} is probably no longer optimal, as the example below suggest.

Let $0<\alpha \leq 1$ and $c>0$. Consider the potential $u(z) = |z_1|^\alpha + \cdots + |z_n|^\alpha$ and the p.s.h.\ function $\varphi(z) = c \log |z_1|$ defined on $\C^n$. We have then that $(\ddc u)^n = n! \left(\frac{\alpha}{2}\right)^{2n} |z_1|^{\alpha-2}  \cdots |z_n|^{\alpha-2} \cdot \lambda$ as measures on $\C^n$. Notice that this expression makes sense, since $\alpha > 0$ implies $|z_1|^{\alpha-2}  \cdots |z_n|^{\alpha-2} \in L^1_{loc}(\C^n)$.

We thus have
\begin{equation*}
\begin{split}
\int_{B_1} e^{-\varphi} (\ddc u)^n &= n!  \left(\frac{\alpha}{2}\right)^{2n} \int_{B_1} \frac{1}{|z_1|^c}  |z_1|^{\alpha-2}  \cdots |z_n|^{\alpha-2} \; d\lambda \\
&= n! \left(\frac{\alpha}{2}\right)^{2n} \int_{B_1} \frac{1}{|z_1|^{c-\alpha +2}}  |z_2|^{\alpha-2}  \cdots |z_n|^{\alpha-2} \; d\lambda,
\end{split}
\end{equation*}
which is finite if and only if $\nu(\varphi;0) = c < \alpha$.

\end{remark}

\section{Regularity of p.s.h.\ functions}

This section is devoted to the proof Theorem \ref{thm:lipschitz-dimN}. Since our approach is rather in the spirit of classical potential theory it will be convenient to deal with \emph{(positive) superharmonic functions}, instead of (negative) subharmonic functions. We refer the reader to \cite{book-armitage-gardiner} \cite{livre-brelot},  \cite{book-helms} for the basic potential theoretic notions  used in what follows.

Let us start with the one complex dimensional case, that is, that of superharmonic functions on an open subset of $\C$. 

\begin{theorem} \label{thm:lipschitz-dim1} Let $F$ be a positive superharmonic function on a connected open subset $\Omega $ of $ \C $ and let ${\mathbb K} $ be a relatively compact subset of $\Omega $. Let $\omega $ be a fixed open neighborhood of ${\mathbb K} $  which is relatively compact in $\Omega $ and set $I_{F,\omega }=\min  \{ F(x)\,:\, x \in   \omega  \}$.

Then for every real number $k >0$ there is a compact set $L \subset   {\mathbb K} $ such that  

(i) $F_{ | L}$ is finite and $k$-Lipschitz

(ii)  $| {\mathbb K} \setminus L | \leq { \frac {  C} {k^2 }}\,(I_{F,\omega })^2$,

where $C$ is a positive constant depending only on $\Omega $, $\omega $ and ${\mathbb K} $.
\end{theorem}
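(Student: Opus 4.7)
The plan is to represent $F$ via the Riesz decomposition as the sum of a harmonic function and a Green potential, bound the mass of the Riesz measure by $I_{F,\omega}$, dominate $|\nabla F|$ by a Riesz potential of order $\alpha=1$ in $\mathbb R^2$, and then combine the weak-type estimate of Theorem \ref{thm:maximal-riesz-estimate} with Theorem \ref{thm:bojarski} to produce the required compact set $L$.

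First I would pick an auxiliary smooth domain $D$ with $\overline\omega \subset D \subset\subset \Omega$ and use the Riesz representation on $D$,
\[
F = h + G_D\mu, \qquad G_D\mu(x) = \int_D g_D(x,y)\,d\mu(y),
\]
where $g_D$ is the Green function of $D$, $\mu = -(2\pi)^{-1}\Delta F|_D \geq 0$ is the Riesz measure, and $h$ is harmonic with boundary values $F|_{\partial D} \geq 0$, hence $h \geq 0$. The crucial step is the mass bound $\mu(D) \leq C\,I_{F,\omega}$: pick $x_0 \in \overline\omega$ at which $F$ attains its minimum on $\overline\omega$ (it exists by lower semicontinuity and satisfies $F(x_0) \leq I_{F,\omega}$), so that $G_D\mu(x_0) \leq F(x_0) \leq I_{F,\omega}$. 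Splitting
\[
G_D\mu(x_0) = \int_{B(x_0,\delta)} g_D(x_0,y)\,d\mu(y) + \int_{D \setminus B(x_0,\delta)} g_D(x_0,y)\,d\mu(y),
\]
using $g_D(x_0,\cdot) \geq c_\delta > 0$ on the second piece, and using the Poisson--Jensen formula on $B(x_0, 2\delta)$ to bound the near-pole contribution, yields the claim. Harnack's inequality applied to $h$ (with $h(x_0) \leq I_{F,\omega}$) together with interior gradient estimates for harmonic functions gives $|\nabla h| \leq C'\,I_{F,\omega}$ on a neighborhood of $\mathbb K$.

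Since $|\nabla_x g_D(x,y)| \leq C(|x-y|^{-1} + 1)$ in real dimension two, the gradient of the potential is dominated pointwise by a Riesz potential of order $1$:
\[
|\nabla(G_D\mu)(x)| \leq C\,I_1\mu(x) + C\,\mu(D) \leq C\,I_1\mu(x) + C'\,I_{F,\omega},
\]
so the local maximal function satisfies $\mathcal M^\rho_{|\nabla F|}(x) \leq C(I_{F,\omega} + \mathcal I_1\mu(x))$ on a neighborhood of $\mathbb K$. Applying Theorem \ref{thm:maximal-riesz-estimate} with $\alpha = 1$, $N = 2$ gives $|\{\mathcal I_1\mu \geq t\}| \leq A\, t^{-2}\, \mu(D)^2 \leq A'\, t^{-2}\, I_{F,\omega}^2$, whence, for $k$ at least a fixed multiple of $I_{F,\omega}$,
\[
\left|\{ x \in \mathbb K : \mathcal M^\rho_{|\nabla F|}(x) > k \}\right| \leq C\, k^{-2}\, I_{F,\omega}^2.
\]
For $k$ below this threshold the stated inequality holds trivially, after enlarging $C$, by taking $L$ to be a single point at which $F$ is finite.

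Outside the exceptional sub-level set of the maximal function, Theorem \ref{thm:bojarski} (applied to $F \in W^{1,1}_{\mathrm{loc}}$, which follows from the Riesz decomposition) ensures that the Lebesgue-set representative of $F$ is locally $C_N k$-Lipschitz; a chaining argument within the bounded domain $\omega$ upgrades this to a genuine global Lipschitz bound on a compact $L \subset \mathbb K$, obtained by inner regularity after removing also the null set where $F$ differs from its Lebesgue-set representative. The main difficulty lies in the mass estimate: $I_{F,\omega}$ controls only the minimum of $F$, while $F$ itself may be infinite precisely where $\mu$ is supported, so the bound must be extracted indirectly from the Green potential evaluated at the minimizing point rather than from $F$ itself.
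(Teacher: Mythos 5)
Your overall architecture is the same as the paper's (Riesz decomposition, mass bound $\|\mu\|_1\lesssim I_{F,\omega}$, domination of $\nabla F$ by a Riesz potential of order $1$ in $\R^2$, then Theorems \ref{thm:maximal-riesz-estimate} and \ref{thm:bojarski}), and you correctly identify the mass bound as the crux. But your proof of that bound has a genuine gap. You decompose $F=h+G_D\mu$ on a single intermediate domain $D$ with $\overline\omega\subset D\subset\subset\Omega$, and to deduce $\mu(D)\le C\,I_{F,\omega}$ you invoke $g_D(x_0,\cdot)\ge c_\delta>0$ on $D\setminus B(x_0,\delta)$. That is false: $g_D(x_0,y)\to 0$ as $y\to\partial D$, so the inequality $G_D\mu(x_0)\le I_{F,\omega}$ gives no control whatsoever on the mass of $\mu$ near $\partial D$. (The near-pole piece is the harmless one --- Poisson--Jensen is not even needed there, since $g_D(x_0,\cdot)$ is bounded below on a compact neighborhood of $x_0$.) As written, the step fails and $\mu(D)$ is not bounded.

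The missing idea is a localization of the Riesz measure to a compact subset of the domain whose Green function you use. The paper achieves this by replacing $F$ by its r\'eduite $R_F^\omega$ relative to $\Omega$: this does not change $F$ on $\omega\supset\mathbb K$ and produces a potential $G_\Omega(\mu)$ whose measure is supported in $\overline\omega$, where $G_\Omega(z,w)\ge c>0$ for $z,w\in\overline\omega$, so the mass bound is immediate. Alternatively, your version can be repaired by taking a second domain $D'$ with $D\subset\subset D'\subset\subset\Omega$, decomposing on $D'$, keeping only $\nu:=\mu|_{\overline D}$ (for which $g_{D'}(x_0,\cdot)\ge c$ on $\overline D$ does hold) and absorbing $G_{D'}\bigl(\mu|_{D'\setminus\overline D}\bigr)$, which is positive and harmonic on $D$, into the harmonic part. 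With that repair, the rest of your outline --- the threshold $k\gtrsim I_{F,\omega}$ to absorb the harmonic gradient, the weak-type $L^2$ bound for $\mathcal I_1\mu$, Theorem \ref{thm:bojarski}, and inner regularity --- matches the paper's proof and goes through; note only that the paper sidesteps your chaining step by working with the free-space kernel $N*\mu$ on all of $\R^2$, so that the global maximal function is used and the restriction $|x-y|\le\rho/3$ in Theorem \ref{thm:bojarski} becomes vacuous.
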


\begin{remark} \label{rmk:lebesgue-set}
Since $F$ is superharmonic,  $F(a)=\lim_{r \to  0} \oint _{B(a,r)}\, F(x)\, dx$ for  $a \in   \Omega $ and so $ \tilde F=F$ on the Lebesgue set ${ \mathcal L}_F$. Since $F$ is l.s.c.\  it follows that ${ \mathcal L}_F=\Omega \cap \{ F<+ \infty \}$.
\end{remark}

\begin{proof} We can assume that  $\Omega $ is bounded and replacing $F$ by its reduction (or r\'{e}duite)  over  $\omega$  -with respect to $\Omega $- (see \cite{livre-brelot}, \cite{book-helms} , \cite{book-armitage-gardiner} or the proof of Lemma \ref{lemma:integral-function-comparison}) we can assume that $F$ is a potential in $\Omega$, i.e., the greatest harmonic minorant of $F$ in $\Omega$ is zero, and that  $\mu = - \Delta F$ is a positive measure  supported in  $\overline\omega $. Thus, $F$ is positive and harmonic in $\Omega \setminus \overline \omega $ and  vanishes on  $\partial \Omega $.

Since $F=G(\mu  )$ is the Green potential of $\mu  $ in $\Omega $ and $G(z,w) \geq c$ in $\overline \omega \times \overline  \omega$ we have that $\|\mu \|_1 \leq C^{ste}\,I_{F,\omega }$. Therefore, in order to prove the theorem it suffices to find $L$ such that $F$ is $k$-Lipschitz on $L$ and $ | \mathbb K \setminus L | \leq { \frac {  C} {k^2 }} \, \|\mu   \|_1^2$.

Write  $F=N*\mu  +H$ where $N(z)= \frac {1} {2 \pi} \log \frac {1} {\|z\|}$ and $H$ is a harmonic function on  $\Omega $. Setting  $R:=\sup  \{   | z-z' | \,;\, z \in   \overline \omega,\; z' \in   \partial \Omega \, \}$ and  $r:=\inf  | z-z' | \,;\, z \in   \overline \omega,\; z' \in   \partial \Omega \, \}$ we have over $\partial\Omega $ the inequalities
$$\frac {1} {2 \pi} \log \left( \frac { 1} {R} \right) \,  \| \mu \|_1 \leq N*\mu   \leq \frac {1} {2 \pi} \log \left( \frac { 1} {r} \right) \,  \| \mu \|_1,$$
which implies that $H$ is bounded by two fixed multiples of $\|\mu\|_1$ in $\Omega $. By the Harnack property (or the Poisson formula), we conclude that for $\omega '$ a relatively compact open subset $\Omega $ we have $\|\nabla H \|_{L^\infty (\omega') } \leq C \|\mu  \| _1$  for a constant $C>0$ depending only on $N$, $\Omega $ et $\omega '$. Taking $\omega '$ to be a connected neighborhood of $\overline \omega $ we see that $H$ is $c \|\mu \| _1$-Lipschitz over $\omega $ where $c=c(N,\omega ',\omega )$. It suffices then to prove (i) and (ii) for $s:=N*\mu  $ instead of $F$.

Notice that $s \in   W^{1,p} _{loc}(\Omega )$ for $1\leq p <  2$ and $ \frac {\partial s} {\partial x_j }= \frac {  \partial N} {\partial x_j } * \mu  =-{ \frac {  1} {2 \pi   }}{ \frac {  x_j} { | x | ^2 }}\, *\, \mu  $. From the fact that $ | { \frac {  x_j} { | x | ^2 }}  | \leq { \frac {  1} { | x |  }}$ and Theorem \ref{thm:maximal-riesz-estimate} it follows that the maximal function of $ | \nabla s | $ satisfies the weak  type $L^2$ inequalities,  
\begin{equation} \label{eq:weakL2}
|\{\mathcal M_{ | \nabla s | }\geq t\,  \} | \leq \frac {C} {t^2 }\,  \|\mu   \|_1 ^{\, 2},\;\; t>0. 
\end{equation}

Given $k>0$,  let $A = \{\mathcal M_{|\nabla s |} \leq \frac{k}{2C_2} \}$, where $C_2$ is the constant appearing in Theorem \ref{thm:bojarski}. From this theorem and Remark \ref{rmk:lebesgue-set} we get $$ | s(z)-s(w) |  \leq k | z-w | \text{ for every  } z,\, w \in   A,$$ and by  (\ref{eq:weakL2}) $ | \C \setminus A | \leq { \frac {  C} {k^2 }} \, \|\mu   \|_1^2$.  Since $ \lambda _2$ is inner regular, the proof is complete.

\end{proof}

We now proceed to the proof of Theorem \ref{thm:lipschitz-dimN}, stated in terms of separately superharmonic functions.

\begin{theorem} Let $F$ be a positive separately superharmonic function on a connected open subset $\Omega $ of $ \C^n$. Let $\omega $ and $\omega '$ be two non-empty open and relatively compact subsets of $\Omega $.
Then for every real number $k >0$,  there is a compact set $L \subset   \omega  $ such that  

(i) $F_{ | L}$ is finite and $k$-Lipschitz

(ii) $|\omega  \setminus L | \leq { \frac {  C} {k^2 }} \,  | F( \xi  ) | ^2$ for every $ \xi   \in   \omega ' $,

where  $C$ is a positive constant depending only on $\Omega $, $\omega $ et $\omega '$.
\end{theorem}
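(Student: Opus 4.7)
The plan is to derive the multivariable result from the one-variable Theorem \ref{thm:lipschitz-dim1} by applying it slicewise in each of the $n$ complex directions, and then to combine the slicewise bounds with the Bojarski--Haj\l asz characterization of Lipschitz points (Theorem \ref{thm:bojarski}). First I would reduce to the situation where $\Omega$, $\omega$ and $\omega'$ are concentric polydiscs, and fix an intermediate polydisc $\omega_1$ with $\omega\cup\omega'\subset\subset\omega_1\subset\subset\Omega$; this can be done by a standard finite covering argument and makes the geometry of every one-dimensional slice uniform in the slicing parameter.

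For each $j\in\{1,\dots,n\}$ and each $w$ in the $j$-th projection of $\omega_1$, the slice $F_w^j(\zeta)=F(\dots,\zeta,\dots)$ is a positive superharmonic function of one complex variable, and Theorem \ref{thm:lipschitz-dim1} applied to it (with the slices of $\omega$ and $\omega_1$ as the compact set and the intermediate open set) yields a Riesz mass $\mu_j^w$ of the reduced slice potential satisfying $\|\mu_j^w\|_1\leq c\,F(w,\xi_j)$ for every $\xi_j$ in the slice of $\omega_1$, uniformly in $w$, together with the associated slicewise weak $L^2$ bound on $|\partial_{z_j}F|$ coming from Theorem \ref{thm:maximal-riesz-estimate}. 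The global Lipschitz set is defined as $L:=\{\xi\in\omega: \mathcal M^{\rho}_{|\nabla F|}(\xi)\leq k/(2C_{2n})\}$ with $\mathcal M^\rho$ the truncated $\R^{2n}$-maximal function of $|\nabla F|$; part (i) is then immediate from Theorem \ref{thm:bojarski}, since $F\in W^{1,1}_{\mathrm{loc}}$ by Avanissian's theorem.

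For part (ii), the pointwise inequality $|\nabla F|\leq \sqrt n\max_j|\partial_{z_j}F|$ together with the fact that the full $\R^{2n}$-maximal function is dominated by iterated slicewise maximal operators (each of which is bounded on $L^{2,\infty}$ by the Hardy--Littlewood--Wiener theorem) reduces the measure estimate on $\omega\setminus L$, direction by direction, to the bound
\begin{equation*}
\int_{\pi_j(\omega)} F(w,\xi_j)^2\,dw \leq C\,F(\xi)^2
\end{equation*}
for the non-negative separately superharmonic function $w\mapsto F(w,\xi_j)$ on an open subset of $\C^{n-1}$.

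The main obstacle is this last inequality. I plan to prove it by iterating the one-dimensional log-potential reduction $n-1$ times, once in each of the free complex coordinates $w_1,\dots,w_{n-1}$. At the $i$-th step the sliced function in the variable $w_{i+1}$ decomposes as $N*\eta_i+H_i$ with both $\|\eta_i\|_1$ and $\sup|H_i|$ bounded by the value of the function at a reference point in the $w_{i+1}$-slice; since the 1D log kernel $N$ belongs to $L^2_{\mathrm{loc}}(\C)$, Young's convolution inequality upgrades this to an $L^2(\mathrm{d}w_{i+1})$-bound by the squared value at the reference point. Choosing each reference point to be the corresponding coordinate of $\xi$, the function remaining after the $(i+1)$-th step is separately superharmonic in one fewer variable, so the iteration collapses after $n-1$ steps to $F(\xi)^2$, yielding the desired estimate and completing the proof.
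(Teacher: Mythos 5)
Your route is genuinely different from the paper's. The paper never controls the maximal function of the full gradient: it only forms the \emph{partial} maximal functions $\mathcal M^2_{|\nabla_j F|}$ in each complex direction, obtains from the one-variable theorem a set $B_1=\bigcap_j A^{(j)}$ on which $F$ is $k$-Lipschitz \emph{separately in each variable}, and then upgrades separate Lipschitzness to joint Lipschitzness by a density-point refinement and a chaining argument (Lemmas \ref{lemma:density-intersection}--\ref{lemma:density-measure-slice} and part C of the proof), finally intersecting over the $n!$ orderings of the coordinate gaps. You instead propose to dominate the full ball maximal function $\mathcal M_{|\nabla F|}$ by the composition $\mathcal M_1\circ\cdots\circ\mathcal M_n$ of directional maximal operators and apply Theorem \ref{thm:bojarski} once. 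If this can be justified it is a cleaner argument that makes the entire density/chaining machinery unnecessary; the slicewise weak-$L^2$ bounds do combine via Fubini into a global bound $\|\,|\nabla F|\,\|_{L^{2,\infty}(\omega_1)}\lesssim F(\xi)$ once Lemma \ref{lemma:integral-function-comparison} is available (your Young-inequality iteration for that lemma is essentially the paper's part A plus Fubini, though for $\omega\neq\omega'$ you still need the Harnack-type Lemma \ref{lemma:harnack}, which a bare ``covering argument'' does not supply).

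The one step you must actually prove, and which you currently attribute to the wrong theorem, is the boundedness of each directional maximal operator on $L^{2,\infty}(\R^{2n})$. The Hardy--Littlewood--Wiener theorem gives only the weak $(1,1)$ bound, and weak-type bounds do \emph{not} compose (this is exactly why the strong maximal function fails to be weak $(1,1)$); since $|\nabla F|$ is genuinely only in weak $L^2$ (think of $F=\log(1/|z_1|)$), you are sitting at an endpoint where this cannot be waved away. The fix is standard but must be written: split $g=g\mathbf 1_{\{|g|>t/2\}}+g\mathbf 1_{\{|g|\le t/2\}}$, use weak $(1,1)$ on the first piece together with $\int_{\{|g|>s\}}|g|\le 2s^{-1}\|g\|_{L^{2,\infty}}^2$ and the trivial $L^\infty$ bound on the second, to get $\mathcal M_j:L^{2,\infty}\to L^{2,\infty}$; only then may you compose the $n$ operators. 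Two further points need attention: Theorem \ref{thm:bojarski} yields the Lipschitz estimate only for pairs with $|x-y|\le\rho/3$, so distant pairs in $L$ require a gluing step (the paper's Lemma \ref{lemma:lipschitz-gluing}); and you should note, as in Remark \ref{rmk:lebesgue-set}, that $F=\tilde F$ on $L$ because $F$ is superharmonic, so that it is $F$ itself and not merely its Lebesgue representative that is finite and Lipschitz there.
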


\begin{remark} \label{rmk:avanissian}
By a theorem of Avanissian \cite{avanissian}, we know that $F$ is lower semicontinuous and superharmonic in $\Omega $.
\end{remark}   

\begin{proof}  We may assume $F \not \equiv + \infty $. The result being local  (cf.  Lemmas \ref{lemma:integral-function-comparison} and \ref{lemma:lipschitz-gluing}  below) we can assume $\omega =\omega ' = D(0,1)^n$ and $\Omega =D(0,4)^n$.

{\bf A.} For a fixed $  \xi  :=( \xi  _1, \xi  _2,\dots,  \xi  _n) \in    D(0,4)^n$ let $F_\xi$ denote the partial function $ D(0,4)\ni z \mapsto F(z, \xi  _2,\dots,  \xi  _n)$, which is superharmonic (possibly $\equiv + \infty$). Denoting ${ \mathcal N}_1:= \{ \xi \in   D(0,4)^n\,;\, F_\xi \equiv + \infty  \}$, the partial gradient $ \nabla _1F( ., \xi  _2,\dots,  \xi  _n) $ is well-defined and its absolute value belongs to $L^1_{loc}(D(0,4))$ for $\xi \notin \mathcal n_1$. For $\xi \in \mathcal N_1$ we set the convention that $ | \nabla _1F( ., \xi  _2,\dots,  \xi  _n) |\equiv + \infty $.

We may define then the partial (local) maximal function ${ \mathcal M}^{2}_{ | \nabla _1 F | }$ over $D(0,1)^n$. It is the positive {\sl everywhere defined} Borel function given by

$${ \mathcal M}^2_{ | \nabla _1F | } (z):=\sup_{0<r \leq 2}  \oint_{ D(z_1,r)}    | \nabla _1F( \xi  _1,   z_2,\dots, z_n) | \, d \xi  _1.$$

We can define  analogously the exceptional sets  ${ \mathcal N}_j$, the partial gradients $ \nabla_ j F $ and the respective maximal functions ${ \mathcal M}^2_{ | \nabla _j F | }$ for $j = 2,\ldots,n$. 

\vspace{5pt} 
{\bf B.} Let us denote $D=D(0,1)$. Fix $k>0$ and define, for $j=1,\dots,\, n$, the sets
$$A^{(j)}= \{ z \in    D ^n\,;\, { \mathcal M}^{2}_{  | \nabla _jF | }(z) \leq c_0k\,  \},$$ where $c_0>0$ is a small constant, chosen independently of $F$ and $n$ in such a way that  $F$ is $k$-Lipschitz over every $A^{(j)}  \bigcap \{z : z_k=z_k^0, \text{ for every } k \neq j\, \}$ (see Theorem \ref{thm:bojarski}). By Theorem \ref{thm:lipschitz-dim1} we have that
$$ \lambda _2((D^n \setminus A^{(j)}) \cap  \{ z : z_\ell=  \xi    _\ell {\rm \  pour \ }\ell\ne j\,  \}) \leq \frac {  c_1} {k^2 } F(  \xi  )^2$$
for every $ \xi   \in   D^n$, where $c_1$ is a constant (notice that the inequality remains true if $ \xi   \in   { \mathcal N}_j$) . Integrating in $ \xi   _\ell $, $\ell \ne j$, using Fubini's Theorem and Lemma \ref{lemma:integral-function-comparison} we get $$ \lambda _{2n}((D^n \setminus A^{(j)})  \leq { \frac {  c_2} {k^2 }}F(  \zeta    )^2$$
for every $ \zeta   \in D^n$. Setting $B_1:=  \bigcap_{j = 1,\cdots,n} A^{(j)}$ we get a Borel subset of $D^n$ such that (i) $ \lambda _{2n}(D^n \setminus B_1)  \leq { \frac {c_2n} {k^2 }} F(  \zeta    )^2$ for every $ \zeta   \in D^n$ and (ii) $F|_{B_1}$ is $k$-Lipschitz in each variable. Notice that the constant $c_2$ depends only on $n$.

Let $\alpha \in (0,1)$ a constant depending only on $n$ which will fix later. Throwing away the points of $B_1$ whose density relative to $D$ with respect to the second variable is $\leq \alpha$ we get by Lemma \ref{lemma:density-measure-slice} a new Borel set $B_2 \subset   B_1$ such that $ \lambda _{2n}(D^n\setminus B_2) \leq \frac {  c_3} {k^2 } F( \xi  )^2$ with a new constant $c_3=c_{3 }(n)$. Repeating this procedure with respect to the other variables we get $B_1\supset B_2 \supset \cdots \supset B_n = B$ such that $ \lambda _{2n}(D^n\setminus B_n) \leq { \frac {  c_4} {k^2 }}F( \xi  )^2$ with the property that all points of $B_p$ have density $\geq \alpha$ relatively to $D$ with respect to the first $p$ variables.

{\bf C.} Let us show now that with this construction, for every pair of points $u,\, v \in   B$ such that $ | u_1-v_1 |  \geq  | u_2-v_2 |  \geq \dots  \geq  | u_n-v_n | $, we have $ | F(u)-F(v) |  \leq nk\, | u-v | $.

To this end we show by  induction on $p$ that if  $u,\, v \in   B_{p}$, and $u_j=v_j$ for $j>p$ we have $ | F(u)-F(v) |  \leq 2pk\, | u-v | $.
For the sake of simplicity we treat the step from $p=n-1$ to  $p=n$, the proof for the general step being similar.
  
Denote $u=(u',u'') \in    \C^{n-1}\times  \C $, $v=(v',v'') \in    \C^{n-1}\times  \C$ and $ \rho = | v''-u'' |  \leq  | u'-v' | $. The sections $T_{u'}$ et $T_{v'}$ of $B_{n-1}$ in the fibers $\{u'\}\times D$ and $\{v'\}\times D$ have  $u''$ and $v''$ as points with density $ \geq  \alpha $. By Lemma \ref{lemma:density-intersection}, if $ \alpha $ is chosen to be close enough to $1$, there is a $w \in   D$ such that $(u',w) \in T_{u'}$, $(v',w) \in T_{v'}$ , $ | w-u'' |  \leq  \rho $ and $ | w-v'' |  \leq  \rho $. Therefore
$$ | F(u)-F(v) |  \leq  | F(u',u'')-F(u',w) | + | F(u',w)-F(v',w) | + | F(v',w)-F(v',v'') | .$$
We know that $F_{ \vert  B_1}$ is $k$-Lipschitz in the last variable and by the induction hypothesis, $F(.,w)$ is $2(n-1)k$-Lipschitz on $B_{n-1} \cap  \{ (z',z'')\,:\, z''=w \}$. Therefore
$$ | F(u)-F(v) |  \leq 2k \rho +2(n-1)k | u'-v' |  \leq 2nk\,  | u'-v' |\leq 2nk\,  | u-v | .$$ 

{\bf D.} We have thus shown that given $F$ separately superharmonic on $D(0,4) ^n$ and $k \geq 1$, there is a Borel subset $B \subset   D^n$ such that (i)  $ \lambda _{2n}(D^n\setminus B) \leq { \frac {  c} {k^2 }} F( \xi  )^2$, for every $ \xi   \in   D^n$,  $c=c(n)$  and (ii) $ | F(u)-F(v) |  \leq k\,  |u-v | $ for every $u$, $v \in   B$  satisfying $ | u_1-v_1 |  \geq \dots  \geq  | u_n-v_n | $. By choosing a permutation $ \sigma  $ of  $\{ 1,\dots ,n \}$   the Lipschitz condition still holds if $u,v$ in (ii) satisfy  $ | u_{ \sigma  (1)}-v_{ \sigma  (1)} |  \geq \dots  \geq  | u_{ \sigma  (n)}-v_{ \sigma  (n)} | $. Replacing $B$ by the intersection of the $n!$ sets obtained in this way we get a new  set $A \subset   D^n$ such that (i)  $ \lambda _{2n}(D^n\setminus A) \leq { \frac {  c'} {k^2 }} F( \xi  )^2$, for  $ \xi   \in   D^n$, $c'=c'(n)$ and (ii) $F_{  | B}$ is $k$-Lipschitz in $B$. Finally, the existence of the compact $L$ and the constant $C$ follows from the inner regularity of $ \lambda _{2n}$.

\end{proof}

\subsection{Auxiliary lemmas}

This first lemma tells us that for a positive separately superharmonic function $f$ on a domain $\Omega  $ of $   \C^n$, the quantities $\int_\omega   | f(z) | ^2\, d \lambda _{2n}(z)$ and $\inf  \{ f(z)^2 : z \in   \omega  \}$ are  in some sense equivalent and independent of the chosen  open subset $\omega  \subset    \subset   \Omega $. 

\begin{lemma} \label{lemma:integral-function-comparison} Let $\Omega $ be an open connected subset of $\C ^n$ and let $\omega  $, $\omega  '$ two non-empty open sets, relatively compact in $\Omega $. Then for every positive separately superharmonic function $f:\Omega  \to  \overline  \R _+$   and every $z \in   \omega '$ we have $$C^{-1}\,\inf  \{ f( \xi  )^2\,;\,   \xi   \in   \omega '\, \}  \leq \int _\omega   | f(x) | ^2\, d \lambda _{2n}(x)  \leq C\, f(z) ^2$$
where $C=C(\Omega ;\omega  ,\omega ') $ is a finite positive constant depending only on $\Omega $, $\omega $ and $\omega ' $. 
\end{lemma}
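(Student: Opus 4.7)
The plan is to prove the two inequalities of the lemma separately. The right inequality $\int_\omega f^2 \, d\lambda_{2n} \leq C f(z)^2$ is the deeper one; the left inequality will follow from a comparison of infima for positive superharmonic functions.

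I would first treat the one-dimensional case $n=1$. On a slightly smaller domain $\Omega_0 \subset \subset \Omega$ containing $\overline{\omega} \cup \overline{\omega'}$, apply the Riesz decomposition $f = G\mu + h$, where $G$ is the Green's function of $\Omega_0$, $\mu$ is the Riesz measure of $f$, and $h \geq 0$ is the greatest harmonic minorant. For the harmonic piece, Harnack's inequality gives $\sup_\omega h \leq C\, h(z) \leq C\, f(z)$ for every $z \in \omega'$, hence $\int_\omega h^2 \leq C' f(z)^2$. For the Green-potential piece, fix a compact $K \subset \Omega_0$ containing a neighborhood of $\overline{\omega} \cup \overline{\omega'}$ and split $\mu = \mu_K + \mu_{K^c}$. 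Then $G\mu_{K^c}$ is harmonic near $\omega$, again handled by Harnack; for $G\mu_K$, Minkowski's integral inequality gives $\|G\mu_K\|_{L^2(\omega)} \leq \int \|G(\cdot, w)\|_{L^2(\omega)} \, d\mu_K(w) \leq C \|\mu_K\|$, using that the logarithmic singularity of $G$ in $\R^2$ is locally $L^2$. Finally $\|\mu_K\| \leq C'' f(z)$ since $f(z) \geq G\mu_K(z) \geq \bigl(\min_{w \in K} G(z, w)\bigr)\, \|\mu_K\|$, with the minimum strictly positive by lower semicontinuity of $G(z, \cdot)$ on the compact $K$.

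For $n \geq 2$, I would reduce to $n = 1$ by induction and Fubini. Using Avanissian's theorem $f$ is genuinely superharmonic and so locally integrable; after a covering argument it suffices to treat the product case $\Omega = \Omega_1 \times \Omega''$, $\omega = D_1 \times \omega''$ and $\omega' = D_1' \times \omega'''$ with $D_1, D_1' \subset \C$ and $\omega'', \omega''' \subset \C^{n-1}$. For each fixed $z_1$ the slice $f(z_1, \cdot)$ is positive separately superharmonic in $n-1$ variables, so the induction hypothesis yields $\int_{\omega''} f(z_1, \cdot)^2 \, d\lambda_{2(n-1)} \leq C_{n-1} f(z_1, \zeta'')^2$ for any fixed $\zeta'' \in \omega'''$. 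Then $z_1 \mapsto f(z_1, \zeta'')$ is positive superharmonic on $\Omega_1 \subset \C$, and the one-dimensional case gives $\int_{D_1} f(z_1, \zeta'')^2 \, d\lambda_2(z_1) \leq C_1 f(\zeta_1, \zeta'')^2$ for any $\zeta_1 \in D_1'$. Composing via Fubini produces $\int_\omega f^2 \leq C_n f(\zeta)^2$ for every $\zeta = (\zeta_1, \zeta'') \in \omega'$.

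For the left inequality it suffices to establish the comparison $\inf_{\omega'} f \leq C \inf_\omega f$, since then $\inf_{\omega'} f^2 \leq C^2 \inf_\omega f^2 \leq (C^2/|\omega|) \int_\omega f^2$. This comparison follows from an integrated Harnack-type bound $\int_{\omega'} f\, d\lambda_{2n} \leq C f(z_*)$ for all $z_* \in \omega$, proved by the same Riesz-decomposition machinery after noting that the adjoint kernel $w \mapsto \int_{\omega'} G(z, w)\, d\lambda(z)$ is dominated by $G(z_*, w)$ uniformly in $w$ (boundary Harnack); then $\inf_{\omega'} f \leq |\omega'|^{-1} \int_{\omega'} f \leq (C/|\omega'|) f(z_*)$, and taking the infimum over $z_* \in \omega$ gives the required comparison. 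The main technical obstacle will be justifying the uniform Green-function estimates in the one-dimensional case with constants depending only on $\Omega$, $\omega$ and $\omega'$, and ensuring that the induction hypothesis applies with a constant independent of the slicing variable $z_1$.
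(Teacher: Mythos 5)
Your proposal is correct, and its overall architecture is the same as the paper's: a one--dimensional Green--potential estimate, a Fubini/induction step over product polydiscs, and a Harnack--type comparison of infima to pass between $\omega$ and $\omega'$. The differences lie in which potential--theoretic tool you use at each sub-step. For the right inequality in dimension one the paper replaces $f$ by its r\'eduite over $\omega$, which turns $f$ directly into a Green potential of a measure carried by $\overline\omega$ and makes your splitting $f=G\mu_K+G\mu_{K^c}+h$ and the two auxiliary Harnack estimates unnecessary; your version works but is longer. For the left inequality the paper uses H\"older together with the balayage identity $\int_\omega f\,d\lambda=\int f\,d\nu_L$ (sweeping $\mathbf 1_\omega\lambda$ onto a compact of $\omega'$), which is precisely the dual of your bound $\int_{\omega'}G(z,w)\,d\lambda(z)\le C\,G(z_*,w)$; note that the latter is not a boundary Harnack principle but the domination principle for potentials ($G(\mathbf 1_{\omega'}\lambda)$ is bounded above, $G(z_*,\cdot)$ is bounded below on $\overline{\omega'}$, and one compares them on the support of the measure), so name it as such or, more simply, obtain $\inf_{\omega'}f\le C\inf_\omega f$ by the elementary chain-of-balls argument using only the sub-mean-value property, which is how the paper does it (Lemma \ref{lemma:harnack}). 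That same infimum comparison is also exactly what your ``covering argument'' needs in order to reduce general $\omega,\omega'$ to aligned product polydiscs: two arbitrary relatively compact open sets in a general connected $\Omega$ need not sit inside a common product subdomain, so after covering $\overline\omega$ by polydiscs $P_i$ you must convert $\int_{P_i}f^2\le C_i\,(\inf_{P_i}f)^2$ into a bound by $f(z)^2$ for $z\in\omega'$ via $\inf_{P_i}f\le C\inf_{\omega'}f$. With the domination step justified and the covering step assembled through the infimum comparison, your proof is complete.
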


\begin{proof}

{\bf A. Case $\mathbf {n=1}$.} We may assume that $\Omega $ is bounded. For the right side inequality we can restrict ourselves, by replacing $f$ by its reduction -or r\'{e}duite- (with respect to $\Omega $)  over  $\omega$ $$R_F^\omega = \inf\{u : u \text{ is positive superharmonic}  \text{ in } \Omega \text{ and } u \geq F \text{ in } \omega \}$$ to the case where $f=G\mu  $ is the Green potential, in $\Omega $, of a finite measure supported in $\overline \omega $, that is $f(z) = \int G(z,z') d\mu(z')$. 

Since  $G(z,z') \leq C+{ \frac {  1} {2 \pi   }}\, \log ( { \frac {  1} {| z-z' |}}  )$ on $\omega \times \omega$ for a constant $C=C(\Omega ,\omega )$ we get $\int _\omega   | f( \xi  ) | ^2\, d \lambda _2( \xi  )\,  \leq  \, C  \|\mu   \|_1^2$ and since $G(z, \xi  ) \geq c=c(\Omega ,\omega, \omega ' )$ for $(z, \xi  ) \in   \omega\times \omega '$, the right side inequality follows. 

For the other inequality, we may replace $f$ by its reduction over $\omega ' $ and suppose $f=G\mu $ with $\mu  $ supported in $\overline \omega '$.  We can even assume, by approximating this reduced function, that $\mu $ is supported on a compact set  $L \subset    \subset   \omega '$ with non-empty interior. By  H\"older inequality and  balayage definition  we have 
$$ \lambda _2(\omega )\,\int _\omega   | f( \xi  ) | ^2\, d \lambda _2( \xi  )\,   \geq  \, [\int_\omega    \,  f( \xi  ) \, \, d \lambda _2( \xi  )\,]^2  = [\int f( \xi  )\, d\nu_{L}  ( \xi  )\,]^2,$$
  where $\nu _{L}$ is the balayage (or swept out measure) of  $1_{\omega}\,  \lambda _2$ over $L$, relatively to $\Omega $ (by definition $R_{G(1_\omega  \lambda _2)}^{\,L}=G(\nu _L)$). As $ |\nu _{L} |\ne 0$ we get the desired inequality: $\int _\omega   | f( \xi  ) | ^2\, d \lambda _2( \xi  )\,  \geq c(\omega ',\omega ,\Omega )\, \inf_{ \xi    \in    \omega' } f( \xi  )^2$.

{\bf B. Case $\mathbf{ n \geq 2}$.} We can easily reduce the problem to the case where $\omega$ and $\omega'$ are both open polydiscs $  \prod _{j=1}^nD_j$ with compact closure in $\Omega$.

Notice that when $\omega =\omega '$, the result follows with no greater difficulty: the first  inequality  is trivial and the  second one follows from  part A and Fubini's Theorem. In the case $\omega$ and $\omega'$ are distinct polydiscs it suffices to show that $\inf _\omega f \geq c(\Omega ,\omega ,\omega ')\, \inf _{\omega '} f$ which is the content of Lemma \ref{lemma:harnack} below.
\end{proof}

The following elementary lemma can be seen as an extension of the Harnack inequalities to superharmonic functions.

\begin{lemma} \label{lemma:harnack} Let  $\Omega $ be a connected open subset of $ \R^N$. Then for every $ \delta >0$, $   \eta >0$,   there is a  $c=c(\Omega , \delta,   \eta  )>0$ such that for every positive superharmonic function $f$ in $\Omega $ and every $z,\, z' \in   \Omega ( \delta ):= \{ m \in   \Omega \,;\, d(m;\partial \Omega ) \geq  \delta ,\;  | m |  \leq  \delta ^{-1}\,  \}$ we have 
$$f(z) \geq c\, \inf  \{ f( \xi  )\,;\,  |  \xi  -z' |  \leq    \eta  \}$$
\end{lemma}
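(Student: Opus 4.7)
The plan is to apply the sub-mean value property of the positive superharmonic function $f$ iteratively along a chain of balls joining $z$ and $z'$ inside $\Omega$. The basic one-step estimate reads: whenever $B(y,s) \subset B(x,R) \subset \Omega$,
\begin{equation*}
f(x) \;\geq\; \frac{1}{|B(x,R)|}\int_{B(x,R)} f \;\geq\; \frac{|B(y,s)|}{|B(x,R)|}\inf_{B(y,s)} f \;=\; (s/R)^N \inf_{B(y,s)} f.
\end{equation*}
Since the conclusion only becomes stronger as $\eta$ decreases, I may also assume $\eta \leq \delta/10$.

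Using that $\Omega(\delta)$ is compact in the open connected set $\Omega$, I would first fix constants $r>0$, $\delta'>0$ and an integer $m$, depending only on $\Omega,\delta,\eta$, with the following property: any two $z,z'\in \Omega(\delta)$ can be joined by a chain $z=x_0, x_1, \ldots, x_{m'}=z'$ with $m' \leq m$, $|x_i - x_{i+1}| \leq r$, $\overline{B(x_i,\delta')} \subset \Omega$ for all $i$, and $r \leq \min(\eta,\delta')/20$. This is a routine consequence of the compactness of $\Omega(\delta)$ together with the polygonal connectedness of a bounded connected open neighborhood of $\Omega(\delta)$ in $\Omega$ (such a neighborhood may be taken to be a finite union of balls with closures in $\Omega$).

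Next I would construct a new sequence $\xi_0=z, \xi_1, \ldots, \xi_{m'}$ with $\xi_i \in \overline{B(x_i,r)}$ inductively, exploiting that $f$ is lower semicontinuous: at each step choose $\xi_{i+1}$ to be a point of $\overline{B(x_{i+1},r)}$ where $f$ attains its infimum on that closed ball. Elementary estimates ($d(\xi_i,\partial\Omega)\geq \delta'-r$ and $|\xi_i - x_{i+1}| \leq 2r$) give $B(x_{i+1},r) \subset B(\xi_i,\delta'/2) \subset \Omega$, so the one-step estimate with $x=\xi_i$, $R=\delta'/2$, $s=r$ yields $f(\xi_i) \geq (2r/\delta')^N \inf_{B(x_{i+1},r)} f = (2r/\delta')^N f(\xi_{i+1})$. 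Iterating and noting $\xi_{m'}\in \overline{B(z',r)} \subset \overline{B(z',\eta)}$,
\begin{equation*}
f(z) \;\geq\; (2r/\delta')^{Nm'} f(\xi_{m'}) \;\geq\; (2r/\delta')^{Nm} \inf_{B(z',\eta)} f,
\end{equation*}
so the lemma holds with $c=(2r/\delta')^{Nm}$.

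No step is a serious obstacle: this is essentially the classical one-sided Harnack chain argument for positive superharmonic functions. The main care needed is to carry out the chaining uniformly over the compact set $\Omega(\delta)$, and to propagate an infimum (rather than pointwise values) along the chain, for which I exploit the lower semicontinuity of $f$ to realize each successive infimum at an explicit point $\xi_{i+1}$ which can then serve as the center for the next application of the mean-value inequality.
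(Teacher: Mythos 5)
Your proof is correct and follows essentially the same route as the paper: a one-step super-mean-value estimate (comparing a point value to the infimum over a nearby smaller ball) iterated along a Harnack chain of uniformly controlled length covering the compact set $\Omega(\delta)$. The only cosmetic difference is that you propagate pointwise values at points realizing successive infima (via lower semicontinuity), whereas the paper propagates the quantity $m_{f,\delta}(z)=\inf_{B(z,\delta/4)}f$ directly; both give the same constant structure.
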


{\sl Remark.} After perhaps diminishing $  \eta $, it suffices to treat the case where $  \eta  \leq  \delta /2$. On replacing  then $ \delta $ by  $  2\eta$, we may assume that $  \eta = \delta /2$.

\begin{proof} If $a,\, b \in   \Omega (\delta )$, $b \in   \Omega $ are such that $d(a,b) \leq  \delta /4$ we have, for $a' \in   B(a, \delta/4 )$,
$$f(a') \geq { \frac {  4^N} { 3^N\,v_N\, \delta ^N   }}\,\int _{B(a',3 \delta /4)} f(z)\, d \lambda _N(z)  \geq\, { \frac {  1} {3^N   }} \inf  \{ f(b')\,;\,  | b-b' |  \leq  \delta /4\, \}.$$
So if we denote  $m_{f, \delta }(z):=\inf  \{ f( \xi  )\,;\,  \xi   \in   \Omega ,\, |  \xi  -z |  \leq  \delta/4 \,  \}$, have 
$$m_{f, \delta } (a)  \geq  \, { \frac {  1} {3^N  }} \;m_ {f, \delta }(b).$$ 

 {\bf b)} Let $ \delta >0$. Fix a connected compact set  ${\mathbb K}  \subset   \Omega $ containing $\Omega ( \delta )$. We can cover ${\mathbb K} $ by a finite number of balls $B(a_j;  \delta' / 8)$, $1 \leq j \leq \ell$ where $ \delta '>0$ is chosen in such a way that ${\mathbb K}  \subset   \Omega ( \delta ')$. Since ${\mathbb K} $ is connected, any two points $m$, $m' \in   {\mathbb K} $ can be joined by a ${ \frac {   \delta '} {4 }}$-chain $ \{ m;a_{i_1};\dots;a_{i_k};\dots;a_{i_\nu };m' \}$ of points of ${\mathbb K} $ (with $\nu  \leq \ell $). From the part  a) we get $f(m) \geq  c\, \inf  \{ f(z)\,;\,  | z-m' |  \leq  \delta '/4\, \}$.
 
 \end{proof}

\begin{lemma}  \label{lemma:lipschitz-gluing} Let $\overline  D_n(a_j,r):=\prod_{p=1}^n \overline  D(a_j^p,r)$, $1 \leq j \leq \ell$, a sequence of closed polydiscs of same radius in $\C ^n$ whose union ${ \mathbb L} _0$ is connected and  let $F:  { \mathbb L}_1:=\bigcup_{1 \leq j \leq \ell}    \overline  D_n(a_j,2r) \to  \R $ be a real function. Suppose that there exists a constant $c>0$  such that for every  $k>0$ and every $j=1,\dots, \ell$ there is a compact subset ${ \mathbb K} _{k,j} \subset   \overline  D_n(a_j,2r)$ such that (i) $F_{ | { \mathbb K} _{k,j}}$ is $k$-Lipschitz and (ii)  $\lambda _{2n}(\overline  D_n(a_j,2r)\setminus \mathbb K _{k,j}) \leq {\frac  { c} {k^2}} $.

Then there exists  $c'>0$ and for every $k>0$ a compact subset ${ \mathbb K} _k \subset   { \mathbb K} $ such that:

(a) $F_{ | { \mathbb K}_k }$ is $k$-lipschitz

(b)  $\lambda _{2n}({ \mathbb L} _0\setminus { \mathbb K} _{k}) \leq {\frac  { c'} {k^2}} $.

Moreover, we can chose $c'$ as depending only on $c$ and on the sequence  $ \{ D_n(a_j,r) \}_{1 \leq j \leq \ell}$.

\end{lemma}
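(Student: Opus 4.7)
The plan is to set $k':=\kappa k$ for a constant $\kappa\in(0,1]$ depending only on the configuration of the polydiscs, and define
\begin{equation*}
\mathbb{K}_k \;:=\; \mathbb{L}_0 \setminus \bigcup_{j=1}^{\ell} \bigl(\overline{D}_n(a_j,2r)\setminus \mathbb{K}_{k',j}\bigr),
\end{equation*}
so that any $x\in\mathbb{K}_k$ automatically belongs to $\mathbb{K}_{k',j}$ for every $j$ with $x\in\overline{D}_n(a_j,2r)$. Hypothesis (ii) combined with a union bound yields $\lambda_{2n}(\mathbb{L}_0\setminus \mathbb{K}_k)\leq \ell c/(k')^2 = (\ell c/\kappa^2)/k^2$, and inner regularity of Lebesgue measure lets us replace this measurable set by a compact subset of comparable measure loss. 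Everything reduces to calibrating $\kappa$ so that $F|_{\mathbb{K}_k}$ is $k$-Lipschitz.

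I carry out the Lipschitz check by splitting on the scale of $|x-y|$. If $|x-y|\leq r$ and $x\in\overline{D}_n(a_{j},r)$, then $|y-a_j|_\infty\leq|y-x|_\infty+r\leq 2r$, so $y\in\overline{D}_n(a_j,2r)$, whence both $x,y\in\mathbb{K}_{k',j}$ and $|F(x)-F(y)|\leq k'|x-y|\leq k|x-y|$ provided $\kappa\leq 1$. For $|x-y|>r$ I chain through overlap regions. Let $\mathcal{G}$ be the finite graph on $\{1,\dots,\ell\}$ whose edges join indices whose $r$-polydiscs meet; connectedness of $\mathbb{L}_0$ makes $\mathcal{G}$ connected, and I denote by $M$ its diameter. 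Each edge $\{j,j'\}$ satisfies $|a_j-a_{j'}|_\infty\leq 2r$, so the polydisc $\overline{D}_n\bigl(\tfrac{a_j+a_{j'}}{2},r\bigr)$ is contained in $\overline{D}_n(a_j,2r)\cap\overline{D}_n(a_{j'},2r)$ and the overlap has volume at least $V_0:=(\pi r^2)^n$. Provided $2c/(k')^2<V_0$, I can fix a point $p_{j,j'}\in\mathbb{K}_{k',j}\cap \mathbb{K}_{k',j'}$ inside that overlap.

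Given $x,y\in\mathbb{K}_k$ with $x\in\overline{D}_n(a_{j_0},r)$ and $y\in\overline{D}_n(a_{j_m},r)$, pick a shortest $\mathcal{G}$-chain $j_0,j_1,\dots,j_m$ ($m\leq M$) and telescope $F(x)-F(y)$ along the sequence $x,p_{j_0,j_1},p_{j_1,j_2},\dots,p_{j_{m-1},j_m},y$. Each consecutive pair lies in a common $\mathbb{K}_{k',j_i}$, so each term is at most $k'$ times the Euclidean diameter $2r\sqrt{n}$ of a $2r$-polydisc; summing the $m+1\leq M+1$ terms gives $|F(x)-F(y)|\leq 2r\sqrt{n}\,(M+1)\,k'$, and taking $\kappa:=1/(2\sqrt{n}(M+1))$ delivers $|F(x)-F(y)|\leq rk\leq k|x-y|$. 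The one remaining detail is the threshold condition $2c/(k')^2<V_0$ needed for the common chain points to exist: it holds as soon as $k$ exceeds a value $k_0=k_0(c,n,r,M)$, and for $k\leq k_0$ one simply takes $\mathbb{K}_k=\emptyset$ after enlarging $c'$ to satisfy $c'/k_0^2\geq\lambda_{2n}(\mathbb{L}_0)$. The main obstacle in this plan is precisely this geometric step of guaranteeing common points across adjacent Lipschitz pieces, which is what forces the threshold $k_0$ and the polydisc-dependence of $c'$; the rest is a routine telescoping computation.
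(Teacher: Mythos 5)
Your proposal is correct and follows essentially the same route as the paper: rescale the Lipschitz constant by a geometric factor, intersect the bad sets, use the volume of the overlap of adjacent doubled polydiscs to produce common chain points, and telescope through the connected union, treating small $k$ trivially. The only slip is harmless: the Euclidean diameter of a polydisc of polyradius $2r$ is $4r\sqrt{n}$ rather than $2r\sqrt{n}$, which merely changes the value of $\kappa$.
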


\begin{proof} Notice first that it suffices to show that the properties (a) and (b) hold for $k$ bigger than some $k_0 = k_0(n,\ell,c,r )$ since the case of arbitrary $k$ will follow by replacing $c'$ by a bigger constant.

Set $\omega _{k,j}=\overline  D_n(a_j,2r)\setminus { \mathbb K} _{k,j}$, ${ \mathbb K} ^{(1)}_k= { \mathbb L} _1\setminus  \cup_j \omega _{k,j}$, ${ \mathbb K} ^{(0)}_k= { \mathbb L} _0\setminus  \cup_j \omega _{k,j}={ \mathbb K}^{(1)}_k \cap   { \mathbb L} _0   $. By assumption, $ \lambda _{2n}(\omega _{k,j}) \leq {\frac  { c} {k^2}}$, $\omega _{k,j} \subset   \overline  D_n(a_j, 2r)$,  and $ | f(m)-f(m') |  \leq  k | m-m' | $  for $m,\,m' \in   D_n(a_j,2r) \cap    { \mathbb K}^{(1)}_k$, $1 \leq j \leq \ell$.
 
In particular, if  $\overline  D_n(a_{j_1},r) \cap    \overline  D_n(a_{j_2},r)\ne  \emptyset  $, and $k$ is big enough ($k \geq k_0(n;r,c)$), then ${ \mathbb K}^{(0)}_k \cap    \overline  D_n(a_{j_1},2r) \cap     \overline  D_n(a_{j_2},2r)\ne  \emptyset  $ and hence $ | f(m_1)-  f(m_2) | \leq 2 n k r$ for $m_1 \in   { \mathbb L} _0 \cap    \overline  D_n(a_{j_1},r)$,   $m_2 \in   { \mathbb L} _0 \cap    \overline  D_n(a_{j_2}, r)$.  From the connectedness of  ${ \mathbb L} _0$ we get that 
$  | f(m)-f(m') |  \leq 2n\ell kr$ for $m$, $m' \in    { \mathbb K}^{(0)}_k$.
 
 Therefore, if $m$, $m'$ are points in ${ \mathbb K}^{(0)} _k$ that do not belong to the same polydisc $\overline  D_n(a_j,2r)$, we have$ | m-m' |  \geq 2r$ and (for $k \geq k_0$)
 $$ | f(m)-f(m') |  \leq 2n\ell k r = 2n\ell\, k{\frac  { r} { | m-m' | }} \times  | m-m' |  \leq c_2\,k\, | m-m' | ,$$
where $c_2=n\ell $.

We see then that (a) and (b) hold for $k \geq c_2\,k_0$ if we set  ${\mathbb K} _k={\mathbb K} _{c_2^{-1}k}^{(0)}$ and $c'=(c_2)^2\,c\ell$.

\end{proof}

In the following we say that a Borel subset $A$ of the disk $D(0,1)$ is of density $\geq  \alpha $ at $z$ relatively to  $D(0,1)$ if $ \lambda _2(A \cap D(z, \rho ) \cap D(0,1)) \geq  \alpha \,  \lambda _2( D(z, \rho ) \cap D(0,1))$, for all $ \rho >0$. Notice that this is much stronger than the usual notion of a density at $a$ larger than $\alpha $.
 
\begin{lemma} \label{lemma:density-intersection} Let  $z$, $z' \in   D(0,1)$, $r= | z-z' | $, and $A,\, A' \subset   D(0,1)$. Then there is a constant $\alpha_0$ with the following property: if relatively to $D(0,1)$, $A$ and $A'$ are  of density $\geq \alpha_0$ at $z$, $z'$ respectively,  then $A \cap A' \cap D(z,r) \cap    D(z',r)$ is non-empty.
\end{lemma}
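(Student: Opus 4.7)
Set $E := D(z,r) \cap D(z',r) \cap D(0,1)$. My plan is to choose $\alpha_0$ so close to $1$ that the set $E \cap A \cap A'$ is forced to have positive Lebesgue measure, hence is non-empty. Applying the density hypothesis with $\rho = r$ at $z$ and at $z'$ respectively gives
\begin{equation*}
\lambda_2\bigl((D(z,r)\cap D(0,1))\setminus A\bigr) \leq (1-\alpha_0)\,\lambda_2\bigl(D(z,r)\cap D(0,1)\bigr),
\end{equation*}
and the analogous bound for $(z',A')$. Since $E$ lies in both $D(z,r)\cap D(0,1)$ and $D(z',r)\cap D(0,1)$, a union bound yields
\begin{equation*}
\lambda_2\bigl(E\setminus (A\cap A')\bigr)\leq (1-\alpha_0)\bigl[\lambda_2(D(z,r)\cap D(0,1))+\lambda_2(D(z',r)\cap D(0,1))\bigr].
\end{equation*}
Thus the argument reduces to bounding the ratio $Q := \lambda_2(E) / [\lambda_2(D(z,r)\cap D(0,1))+\lambda_2(D(z',r)\cap D(0,1))]$ below by a universal positive constant; any $\alpha_0 > 1-Q$ will then complete the proof.

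To estimate $Q$ I would set $w := (z+z')/2\in D(0,1)$ and note from the triangle inequality the two inclusions $D(w,r/2)\subset D(z,r)\cap D(z',r)$ and $D(z,r)\cup D(z',r)\subset D(w,3r/2)$. The key ingredient is a doubling-type inequality for disks centred at an interior point of a convex set: for any convex $K\subset\C$, any $w\in K$, and any $\rho>0$,
\begin{equation*}
\lambda_2\bigl(D(w,\rho)\cap K\bigr)\ \geq\ \tfrac{1}{9}\,\lambda_2\bigl(D(w,3\rho)\cap K\bigr).
\end{equation*}
To prove this I would consider the contraction $\phi(x) := w+(x-w)/3$, which sends $D(w,3\rho)$ onto $D(w,\rho)$ with Jacobian $1/9$; since $\phi(x)=\tfrac{2}{3}w+\tfrac{1}{3}x$ is a convex combination of $w,x\in K$, $\phi$ also sends $K$ into itself, so $\phi(D(w,3\rho)\cap K)\subset D(w,\rho)\cap K$ and the measure bound is immediate.

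Applying this with $K=D(0,1)$ and $\rho=r/2$ and combining with the two inclusions above yields
\begin{equation*}
\lambda_2(E)\ \geq\ \lambda_2\bigl(D(w,r/2)\cap D(0,1)\bigr)\ \geq\ \tfrac{1}{9}\,\lambda_2\bigl(D(w,3r/2)\cap D(0,1)\bigr)\ \geq\ \tfrac{1}{18}\bigl[\lambda_2(D(z,r)\cap D(0,1))+\lambda_2(D(z',r)\cap D(0,1))\bigr],
\end{equation*}
so $Q\geq 1/18$, and any $\alpha_0>17/18$ suffices. The main obstacle is spotting the convex-doubling estimate, which sidesteps an otherwise delicate case analysis when $z$ or $z'$ lies close to $\partial D(0,1)$; once it is available, the rest is routine set-theoretic bookkeeping.
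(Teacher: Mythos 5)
Your proof is correct and follows essentially the same route as the paper's: both reduce the lemma to the fact that $\lambda_2\bigl(D(z,r)\cap D(z',r)\cap D(0,1)\bigr)$ is at least a universal fraction of $\lambda_2\bigl(D(z,r)\cap D(0,1)\bigr)$, the paper extracting the contradiction by a pigeonhole argument and you by an equivalent union bound. The one genuine addition is your contraction/convexity argument establishing that geometric comparison (with explicit constant $1/18$, hence $\alpha_0>17/18$), a point the paper leaves unproved as ``for some constant $c_0$''.
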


\begin{proof} If $A \cap A' \cap D(z,r) \cap    D(z',r)= \emptyset  $, then either $ \lambda _2(A \cap D(z,r) \cap D(z',r)) \leq { \frac {  1} {2 }} \lambda _2(D(z,r) \cap D(z',r) \cap D(0,1))$ or  $ \lambda _2(A' \cap D(z,r) \cap D(z',r)) \leq { \frac {  1} {2 }} \lambda _2(D(z,r) \cap D(z',r) \cap D(0,1))$. In the first case  $ \lambda _2(D(z,r) \cap D(0,1)\setminus A) \geq { \frac {  1} {2 }} \lambda _2(D(z,r) \cap D(z',r) \cap D(0,1)) \geq  c_0 \lambda _2(D(z,r) \cap D(0,1))$ for some constant $c_0$. This means that $A$ is of density $ \leq 1-c_0$ in $z$ relatively to $D(0,1)$. A similar argument applies in  the second case using $A'$, $z'$ instead of $ A$, $z$. Hence, it suffices to take $ \alpha _0=1-c_0$.

\end{proof}

\begin{lemma} \label{lemma:density-measure} Let $A \subset   \R ^N$ be a Borel set contained in a cube (or an open ball) $C_0$ and let  $ \alpha  \in   (0,1)$. Denote  $A_N( \alpha )$ the set of points of $A$ where $A$ is of density $\geq \alpha$ relatively to $C_0$. Then $A_N( \alpha )$ is Borel-measurable and
 \begin{align}    \lambda _N(C_0\setminus   A_N( \alpha ))
 & \leq  c(N, \alpha ) \lambda _N(C_0\setminus   A)\, \nonumber
\end{align}
for a finite constant $c(N, \alpha )>0$.
 
\end{lemma}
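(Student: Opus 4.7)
The Borel measurability of $A_N(\alpha)$ is routine: for each fixed $\rho > 0$, the map $x \mapsto \lambda_N(A \cap D(x,\rho) \cap C_0)/\lambda_N(D(x,\rho) \cap C_0)$ is continuous in $x$, and the ratio depends continuously on $\rho$ (spheres being Lebesgue null), so the density condition may be tested on a countable dense set of radii. Hence $A_N(\alpha)$ is the intersection of $A$ with a countable intersection of Borel sets.

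For the measure estimate, my strategy is to rephrase the failure of density as a maximal inequality. Define the $C_0$-restricted maximal function
\[
\mathcal M_*(f)(x) := \sup_{\rho > 0} \frac{1}{\lambda_N(D(x,\rho) \cap C_0)} \int_{D(x,\rho) \cap C_0} |f|\, d\lambda_N, \qquad x \in C_0.
\]
For any $x \in A \setminus A_N(\alpha)$ the definition supplies some $\rho_x > 0$ with $\lambda_N((C_0 \setminus A) \cap D(x,\rho_x) \cap C_0) > (1-\alpha)\lambda_N(D(x,\rho_x) \cap C_0)$, so that $\mathcal M_*(\mathbf 1_{C_0 \setminus A})(x) > 1-\alpha$. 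A weak-type $(1,1)$ estimate for $\mathcal M_*$ of the form $\lambda_N(\{\mathcal M_* f > t\}) \leq C_N\, t^{-1} \|f\|_{L^1(C_0)}$ would then yield $\lambda_N(A \setminus A_N(\alpha)) \leq C_N (1-\alpha)^{-1} \lambda_N(C_0 \setminus A)$, and adding $\lambda_N(C_0 \setminus A)$ itself would close the argument with $c(N,\alpha) = 1 + C_N/(1-\alpha)$.

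The main point, and the step where one must take some care, is the weak-type bound for $\mathcal M_*$. Comparing $\mathcal M_*$ directly to the usual Hardy--Littlewood maximal function on $\R^N$ is not immediate, since $\lambda_N(D(x,\rho) \cap C_0)/\lambda_N(D(x,\rho))$ is not bounded below uniformly in $\rho$ (for $\rho \gg \mathrm{diam}(C_0)$ this ratio tends to zero). The remedy is to split the analysis according to $D := \mathrm{diam}(C_0)$. In the easy regime $\|f\|_{L^1(C_0)} > t\lambda_N(C_0)$ the level set is contained in $C_0$ with $\lambda_N(C_0) < t^{-1}\|f\|_{L^1(C_0)}$. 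In the opposite regime $\|f\|_{L^1(C_0)} \leq t\lambda_N(C_0)$, for every $\rho \geq D$ one has $D(x,\rho) \supset C_0$, hence the average in the definition of $\mathcal M_*$ equals $\|f\|_{L^1(C_0)}/\lambda_N(C_0) \leq t$; so at every point $x$ of the level set the witness radius must satisfy $\rho_x < D$. One then invokes Besicovitch's covering theorem to extract a countable subfamily $\{D(x_j,\rho_j)\}$ of bounded multiplicity $b_N$ covering the level set, and summing $\lambda_N(D(x_j,\rho_j) \cap C_0) < t^{-1} \int_{D(x_j,\rho_j) \cap C_0} |f|\, d\lambda_N$ together with $\sum_j \int_{D(x_j,\rho_j) \cap C_0} |f|\, d\lambda_N \leq b_N \|f\|_{L^1(C_0)}$ yields the bound with $C_N = b_N + 1$.
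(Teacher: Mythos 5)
Your proof is correct and is essentially the paper's argument: both cover the set of points where the density fails by witness balls on which $C_0\setminus A$ occupies a proportion $>1-\alpha$ of $D(x,\rho_x)\cap C_0$, apply Besicovitch's covering theorem to extract a bounded-multiplicity subcover, and sum; your weak-type bound for the $C_0$-restricted maximal operator is just a repackaging of that covering computation. The only genuine addition is your explicit case analysis ensuring the witness radii stay below $\mathrm{diam}(C_0)$ (so that Besicovitch applies with uniformly bounded radii), a point the paper's proof passes over in silence.
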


\begin{proof} We can cover $A\setminus  A_N( \alpha )$ by balls  $B_x=B(x,r_x)$, $x \in   A\setminus  A _N( \alpha )$ satisfying 
$$ \lambda _N (C_0 \cap    B_x\setminus   A)>(1- \alpha ) \, \lambda _N(C_0 \cap B_x).$$
By the Besicovich's covering theorem (cf.\ \cite{book-mattila}, p.\ 30) there is an integer $\nu _N$ depending only on $N$ such that we can extract a countable sub-family $ \{ B_{x_j} \}_{j \geq 1}$ of balls that are $\nu _N$ to $\nu _N$ disjoints and still cover $A\setminus  A_N( \alpha )$. It follows that
 \begin{align}  \lambda _N(A\setminus A_N( \alpha ))  \leq  \sum_j\,    \lambda _N(C_0 \cap   B_{x_j}) 
 & \leq    \sum_j\,  (1- \alpha )^{-1}\lambda _N(C_0 \cap    B_{x_j} \setminus A)\nonumber 
\end{align}
and so $$\lambda _N(A\setminus A_N( \alpha ) )  \leq \,  \nu _N\, (1- \alpha )^{-1}\,   \lambda _N(C_0\setminus A).$$

For the  measurability of $A_N( \alpha )$ it is enough to observe that
$$A_N( \alpha )=  \bigcap _{r \in   { \mathbb Q} _+^*,\,  \beta  \in   { \mathbb Q} _+^*,\,  \beta < \alpha }  \{ x \in   A :  \lambda _N(A \cap B(x,r) \cap C_0) >  \beta \,  \lambda _N( B(x,r) \cap C_0)\, \}$$
and that every $A_N( \beta ,r):= \{ x \in   A : \lambda _N(A \cap B(x,r) \cap C_0) >  \beta \,  \lambda _N( B(x,r) \cap C_0)\, \}$ is relatively open in $A$.

\end{proof}

\begin{lemma} \label{lemma:density-measure-slice} We keep the notations and assumptions of Lemma \ref{lemma:density-measure} and fix a decomposition $\R ^N=\R ^m\times \R ^p$. Let $ A_N(m, \alpha )$ be the set of points $x=(x',x'')$ in $   A\,\,$ such that with respect to the slice $C_0 \cap     \{ x' \}\times \R ^p$,  $A_{x'}= \{ y ''\,;\, (x',y'') \in   A   \}$ is of density $\geq  \alpha $. Then $A_d(m, \alpha )$  is Borel-measurable  and there is a constant $C(m,\alpha)$ such that
$$ \lambda _N(A\setminus  A_d(m, \alpha )) \leq \, C(m, \alpha ) \, \lambda _N(C_0\setminus A) $$
\end{lemma}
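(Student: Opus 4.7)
\textbf{Proof proposal for Lemma \ref{lemma:density-measure-slice}.} The plan is to reduce everything to the previous lemma by slicing. For each $x'\in\R^m$ write $C_0^{x'}=\{y''\in\R^p:(x',y'')\in C_0\}$ and $A_{x'}=\{y''\in\R^p:(x',y'')\in A\}$. If $C_0$ is an open cube, then $C_0^{x'}$ is an open cube in $\R^p$ (independent of $x'$) whenever it is non-empty; if $C_0$ is an open ball, $C_0^{x'}$ is an open ball in $\R^p$. In either case Lemma \ref{lemma:density-measure} applies to $A_{x'}\subset C_0^{x'}$ in dimension $p$ and gives, for $x'$ such that $C_0^{x'}\neq\emptyset$,
\begin{equation*}
\lambda_p\bigl(A_{x'}\setminus (A_{x'})_p(\alpha)\bigr)\;\leq\;c(p,\alpha)\,\lambda_p\bigl(C_0^{x'}\setminus A_{x'}\bigr),
\end{equation*}
where $(A_{x'})_p(\alpha)$ is the set of points of $A_{x'}$ at which $A_{x'}$ has density $\geq\alpha$ relative to $C_0^{x'}$. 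By the very definition of $A_d(m,\alpha)$, the slice of $A\setminus A_d(m,\alpha)$ at $x'$ is exactly $A_{x'}\setminus(A_{x'})_p(\alpha)$.

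Integrating the above inequality in $x'\in\R^m$ and applying Fubini's theorem yields
\begin{equation*}
\lambda_N\bigl(A\setminus A_d(m,\alpha)\bigr)=\int_{\R^m}\lambda_p\bigl(A_{x'}\setminus(A_{x'})_p(\alpha)\bigr)\,d\lambda_m(x')\leq c(p,\alpha)\int_{\R^m}\lambda_p\bigl(C_0^{x'}\setminus A_{x'}\bigr)\,d\lambda_m(x'),
\end{equation*}
and the right-hand side equals $c(p,\alpha)\,\lambda_N(C_0\setminus A)$ by Fubini. So the required inequality holds with $C(m,\alpha):=c(p,\alpha)$.

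It remains to check Borel measurability of $A_d(m,\alpha)$, and this is where one has to be slightly careful. Mimicking the end of the proof of Lemma \ref{lemma:density-measure}, we write
\begin{equation*}
A_d(m,\alpha)=\bigcap_{r\in\Q_+^*,\,\beta\in\Q_+^*,\,\beta<\alpha}\bigl\{(x',x'')\in A:\lambda_p(A_{x'}\cap B(x'',r)\cap C_0^{x'})>\beta\,\lambda_p(B(x'',r)\cap C_0^{x'})\bigr\}.
\end{equation*}
The function $(x',x'')\mapsto \lambda_p(A_{x'}\cap B(x'',r)\cap C_0^{x'})$ is Borel on $\R^N$ by a standard Fubini-type argument (approximate $\mathbf1_{A\cap B(\cdot,r)\cap C_0}$ by a monotone sequence of characteristic functions of elementary rectangles, for each of which the partial integral in $x''$ is jointly Borel, then pass to the limit), and the same holds for $(x',x'')\mapsto\lambda_p(B(x'',r)\cap C_0^{x'})$. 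Each set in the intersection is therefore Borel, which gives the measurability of $A_d(m,\alpha)$.

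The main — but quite mild — obstacle is this last measurability verification; the inequality itself reduces to one application of Fubini and Lemma \ref{lemma:density-measure}, and the constant obtained depends only on $p$ (hence on $m$ and $N$) and $\alpha$, as required.
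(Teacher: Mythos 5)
Your proof is correct and follows the same route as the paper, which simply states that the inequality follows from Lemma \ref{lemma:density-measure} applied slicewise together with Fubini's theorem, and that measurability is checked as in Lemma \ref{lemma:density-measure}; you have merely filled in the details (constancy of the slice domains, the Borel-measurability of the partial-measure functions needed both for the intersection representation and to legitimize the use of Fubini).
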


\begin{proof}
The proof of the measurability follows the same lines as above in the proof of Lemma  \ref{lemma:density-measure} and the inequality follows from Lemma \ref{lemma:density-measure} and Fubini's Theorem.
\end{proof}

\bibliography{refs-ancona-kaufmann}

\begin{thebibliography}{DNS10}

\bibitem[AG01]{book-armitage-gardiner}
David~H. Armitage and Stephen~J. Gardiner.
\newblock {\em Classical potential theory}.
\newblock Springer Monographs in Mathematics. Springer-Verlag London, Ltd.,
  London, 2001.

\bibitem[AH96]{adams-hedberg}
David~R. Adams and Lars~Inge Hedberg.
\newblock {\em Function spaces and potential theory}, volume 314 of {\em
  Grundlehren der Mathematischen Wissenschaften [Fundamental Principles of
  Mathematical Sciences]}.
\newblock Springer-Verlag, Berlin, 1996.

\bibitem[Ava61]{avanissian}
Vazguen Avanissian.
\newblock Fonctions plurisousharmoniques et fonctions doublement
  sousharmoniques.
\newblock {\em Ann. Sci. \'Ecole Norm. Sup. (3)}, 78:101--161, 1961.

\bibitem[BH93]{bojarski-hajlasz:sobolev}
Bogdan Bojarski and Piotr Haj{\l}asz.
\newblock Pointwise inequalities for {S}obolev functions and some applications.
\newblock {\em Studia Math.}, 106(1):77--92, 1993.

\bibitem[Boj91]{bojarski}
B.~Bojarski.
\newblock Remarks on some geometric properties of {S}obolev mappings.
\newblock In {\em Functional analysis \& related topics ({S}apporo, 1990)},
  pages 65--76. World Sci. Publ., River Edge, NJ, 1991.

\bibitem[Bre69]{livre-brelot}
Marcel Brelot.
\newblock {\em Axiomatique des fonctions harmoniques}, volume 1965 of {\em
  Deuxi\`eme \'edition. S\'eminaire de Math\'ematiques Sup\'erieures, No. 14
  (\'Et\'e}.
\newblock Les Presses de l'Universit\'e de Montr\'eal, Montreal, Que., 1969.

\bibitem[BT82]{bedford-taylor}
Eric Bedford and B.~A. Taylor.
\newblock A new capacity for plurisubharmonic functions.
\newblock {\em Acta Mathematica}, 149(1-2):1--40, 1982.

\bibitem[Dem]{demailly:agbook}
Jean-Pierre Demailly.
\newblock {\em Complex Analytic and Differential Geometry}.
\newblock \url{http://www-fourier.ujf-grenoble.fr/~demailly/}.

\bibitem[DNS10]{dns:estimates}
Tien-Cuong Dinh, Vi{\^e}t-Anh Nguy{\^e}n, and Nessim Sibony.
\newblock Exponential estimates for plurisubharmonic functions and stochastic
  dynamics.
\newblock {\em J. Differential Geom.}, 84(3):465--488, 2010.

\bibitem[Hel09]{book-helms}
Lester~L. Helms.
\newblock {\em Potential theory}.
\newblock Universitext. Springer-Verlag London, Ltd., London, 2009.

\bibitem[H{\"o}r07]{hormander:convexity}
Lars H{\"o}rmander.
\newblock {\em Notions of convexity}.
\newblock Modern Birkh\"auser Classics. Birkh\"auser Boston, Inc., Boston, MA,
  2007.
\newblock Reprint of the 1994 edition.

\bibitem[Kis00]{kiselman:sous-niveau}
Christer~O. Kiselman.
\newblock Ensembles de sous-niveau et images inverses des fonctions
  plurisousharmoniques.
\newblock {\em Bull. Sci. Math.}, 124(1):75--92, 2000.

\bibitem[Kli91]{klimek}
Maciej Klimek.
\newblock {\em Pluripotential theory}, volume~6 of {\em London Mathematical
  Society Monographs. New Series}.
\newblock The Clarendon Press Oxford University Press, New York, 1991.
\newblock Oxford Science Publications.

\bibitem[Laz04]{lazarsfeld-positivity2}
Robert Lazarsfeld.
\newblock {\em Positivity in algebraic geometry. {II}}, volume~49 of {\em
  Ergebnisse der Mathematik und ihrer Grenzgebiete. 3. Folge. A Series of
  Modern Surveys in Mathematics [Results in Mathematics and Related Areas. 3rd
  Series. A Series of Modern Surveys in Mathematics]}.
\newblock Springer-Verlag, Berlin, 2004.
\newblock Positivity for vector bundles, and multiplier ideals.

\bibitem[Lel98]{lelong:positivity}
Pierre Lelong.
\newblock {\em Positivity in complex spaces and plurisubharmonic
  functions/{P}ositivit\'e dans les espaces complexes et fonctions
  plurisousharmoniques}, volume 112 of {\em Queen's Papers in Pure and Applied
  Mathematics}.
\newblock Queen's University, Kingston, ON, 1998.
\newblock Edited and with a note by Paulo Ribenboim.

\bibitem[Mat95]{book-mattila}
Pertti Mattila.
\newblock {\em Geometry of sets and measures in {E}uclidean spaces}, volume~44
  of {\em Cambridge Studies in Advanced Mathematics}.
\newblock Cambridge University Press, Cambridge, 1995.
\newblock Fractals and rectifiability.

\bibitem[Sib99]{sibony:dynamique-Pk}
Nessim Sibony.
\newblock Dynamique des applications rationnelles de {$\bold P^k$}.
\newblock In {\em Dynamique et g\'eom\'etrie complexes ({L}yon, 1997)},
  volume~8 of {\em Panor. Synth\`eses}, pages ix--x, xi--xii, 97--185. Soc.
  Math. France, Paris, 1999.

\bibitem[Sko72]{skoda:sous-ens-an}
Henri Skoda.
\newblock Sous-ensembles analytiques d'ordre fini ou infini dans {${\bf
  C}^{n}$}.
\newblock {\em Bull. Soc. Math. France}, 100:353--408, 1972.

\end{thebibliography}
\bibliographystyle{alpha}
\end{document}